\def\cmm{{\cal M}}
\def\cma{{\cal A}}
\def\cU{{\cal U}}
\def\tla{{\widetilde \lambda}}
\def\tlu{{\widetilde u}}
\def\CC{\mathbb{C}}
\def\up#1{^{({#1})}} %
\def\nref#1{(\ref{#1})}
\def\inv{^{-1}}%
\def\eps{\epsilon}%
\def\tendsto{\rightarrow}
\newcommand{\ca}[1]{\mathcal{#1}}
\newcommand{\eq}[1]{\begin{equation}\label{#1}}
\newcommand{\en}{\end{equation}}
\newcommand{\xbmatrix}[1]{\begin{bmatrix} #1 \end{bmatrix}}
\title{A rational approximation method for the nonlinear eigenvalue problem} 
\author{Yousef Saad \thanks{University of Minnesota, Department of Computer Science 
\& Engineering, 4-192 Keller Hall, 200 Union Street SE, Minneapolis, MN 55455,  USA.
  Work supported by NSF grant 1812695.
  e-mail: \texttt{saad@umn.edu}}
\and Mohamed El-Guide \thanks{Mohammed VI Polytechnic University, Centre for Behavioral Economics and Decision Making (CBED), Lot 660, Hay Moulay Rachid, Ben Guerir 43150, Green City, Morocco. Work supported by NSF grant 1812695  e-mail: \texttt{mohamed.elguide@um6p.ma}}
  \and Agnieszka Mi\k{e}dlar
  \thanks{University of Kansas, Department of Mathematics, 405 Snow Hall, 1460 Jayhawk Blvd., Lawrence, KS 66045-7594, USA. Work supported by NSF grant  1812927. e-mail:
  \texttt{amiedlar@ku.edu}}
  }
\begin{document} 

\maketitle 
\begin{abstract}
  This paper presents a method for computing eigenvalues and
  eigenvectors for some types of nonlinear eigenvalue problems.  The
  main idea is to approximate the functions involved in the eigenvalue
  problem by rational functions and then apply a form of
  linearization.  Eigenpairs of the expanded form of this
  linearization are not extracted directly. Instead, its structure is
  exploited to develop a scheme that allows to extract all eigenvalues
  in a certain region of the complex plane by solving an eigenvalue
  problem of much smaller dimension. Because of its simple
  implementation and the ability to work efficiently in large
  dimensions, the presented method is appealing when 
  solving challenging engineering problems. A few theoretical results
  are established to explain why the new approach works and
  numerical experiments are presented to validate the proposed
  algorithm.
\end{abstract}

\begin{keywords} 
  Nonlinear eigenvalue problem,  Rational approximation, 
  Cauchy integral formula, FEAST eigensolver.
\end{keywords}


\section{Background and introduction}
Consider a non-empty open set $\Omega \subset \CC$ and a matrix-valued
function $T(z): \Omega \rightarrow \CC^{n \times n}$
that is analytic on
$\Omega$, i.e., each component of $T(z)$ is an analytic function of
$z$. The nonlinear eigenvalue problem (NLEVP) consists of finding
$\lambda \ \in \Omega $ and a nonzero vector $u \ \in \ \CC^n$ such
  that:
  \eq{eq:nlin0}{}
  T(\lambda)u = 0.
  \en
  We call $\lambda$ an
eigenvalue of $T(\cdot)$ and $u$ the associated eigenvector.  Problems
of this type arise in numerous applications, including in the analysis
of vibration of rails under excitation from fast
trains~\cite{Kli03,ApeMW04}, in the optimization of acoustic emissions
of high speed trains~\cite{MacMMM06}, in electronic structure
calculations of quantum dots~\cite{Hwa04,Vos06}, and in the study of
photonic resonators at the nanoscale~\cite{Bha09,Val03}.  As a result,
these problems have been extensively studied in the
literature~\cite{TisM01,MehV04,Vos14,MacMT15,GueT17} and a plethora of
specialized methods were developed for different types of structures
of $T(z)$, see e.g.~\cite{TisM01} for quadratic
or~\cite{MacMT15,MacMMM06a} for polynomial eigenvalue problems.

The first type of nonlinear eigenvalue problems that were studied were
polynomial eigenvalue problems (PEPs) in which $T(z)$ is a
polynomial of small degree in $z$, with matrix coefficients.
These problems can be easily converted
into an equivalent (same eigenvalues) generalized eigenvalue problem
via \textit{linearization}.  The resulting, larger problem, is then
solved by standard techniques~\cite{GolV13}.  Linearization-based
approaches for solving polynomial eigenvalue problems have been
extensively studied, see, e.g.,~\cite{GohLR09,MehV04}.

This work focuses on computing the eigenvalues of
general nonlinear eigenvalue problems, i.e., those for which
$T(z)$ is not a polynomial. The goal is to compute  all eigenvalues located
inside a closed contour $\Gamma$ of the complex plane.
\nocite{AsaSTIK09,BeyEK11,Bey12,Eff13,YokS13,VBar16,VBarK16,GavMP18}
\nocite{SuB11,JarMM12,GueVBMM14,VanB15,DopGP18,LiePVM18}
State-of-the-art numerical methods for general nonlinear eigenvalue
problems include Newton-type methods, e.g.,
\cite{Kub70,Lan66,Ung50,Ruh73,Neu85,Lan61,Sch08,SleBFV96,BetV04,Kre09},
techniques based on contour
integration~\cite{AsaSTIK09,BeyEK11,Bey12,Eff13b,YokS13,GavMP18}, and
methods based on polynomial and rational approximations of
$T(z)$, e.g., ~\cite{MehV04, MacMMM06}.

The method proposed in this paper belongs to the last of the three categories listed
above and it relies specifically on rational approximations obtained from
the Cauchy integral formula.  Among  methods that exploit contour integrals, 
Beyn's method~\cite{Bey12} figures prominently and is currently the best known.
Given a matrix $Q$ which contains a set of $k$ vectors, e.g., chosen randomly,
Beyn's method exploits the relation between the
following matrices, called \textit{$p$-th (order) moments},
\begin{equation}
\label{eq:moments_Sp}
S_p = \frac{1}{2\pi \imath} \int\limits_{\Gamma}z^pT(z)^{-1}Q \ dz,\quad p = 0,1,2,\ldots
\end{equation}
By Keldysh's theorem, under certain conditions, $T(z)\inv$ can be written locally as
$T(z)\inv = U (z I - \Lambda)\inv W^H + R(z)$,
where $R(z)$ is analytic and therefore:
\begin{eqnarray}
\label{eq:zero_first_moments}
S_0 & = & \frac{1}{2\pi \imath} \int\limits_{\Gamma} T(z)^{-1}Q \ dz = UW^*Q \ \in \CC^{n \times k}, \nonumber \\
S_1 & = & \frac{1}{2\pi \imath} \int\limits_{\Gamma} zT(z)^{-1}Q \ dz = U\Lambda W^*Q \ \in \CC^{n \times k}.
\end{eqnarray}
The idea then is to exploit the relation between the two matrices on the right-hand sides
of the above equations, to extract $\Lambda$. For this purpose, the algorithm relies on the singular value decomposition (SVD).


Different interpretations of the method just sketched have been
exploited.  Thus, the article~\cite{VanB15} made a link between Beyn's
method and \emph{rational filters} which are exploited to ``filter'' (extract) the
approximate invariant subspace corresponding to the eigenvalues
$\lambda_i, i = 1, \ldots, n_\Gamma,$ located inside $\Gamma$. This led 
to a study of general filter functions of the form
\begin{equation}
\label{eq:filter}
\sum\limits_{j = 0}^{N - 1} \frac{\omega_j z_j^p}{z_j - z},
\end{equation}
for solving NLEVPs in \cite{VanB15} and  in ~\cite{VBarK16}.  The
filtering nature of contour integral methods is also explored in the
Nonlinear FEAST algorithm~\cite{GavMP18}. In this context,
we also mention the interesting work
by Embree et al.~\cite{EmbGBG19} who make the connection with \emph{transfer
  functions} of  dynamics in system theory.

The method proposed in this paper takes a general NLEVP, approximates it with
a rational eigenvalue problem and then solves this rational problem by a form of
linearization. This general approach is not new and the following
is a short, albeit incomplete,  overview of this class of techniques.
In the  rational Krylov-based approach known as
\textit{Newton rational Krylov} proposed in~\cite{VanBMM13}, the 
matrix-valued function $T(z)$ is first approximated by a
low-degree Hermite interpolating polynomial in Newton form.
The resulting generalized eigenvalue
problem is then solved by a rational Krylov method, with the
interpolations points taken as shifts. This leads to a   flexible
approach that can easily incorporate information from the most recent
iterations, take advantage of the underlying structure of the problem
and simultaneously find several eigenvalues of interest. To improve
convergence, the Newton rational Krylov 
method was generalized from polynomial to linear rational
interpolation, resulting in an algorithm called \textit{fully rational
  Krylov method} for NLEVPs, commonly refereed as \textit{NLEIGS},
see~\cite{GueVBMM14}. Of
particular interest is the dynamic NLEIGS variant which utilizes the
rational Newton expansion and the companion-like strong linearizations
to dynamically add interpolation nodes and poles to extend the matrix
pencils, and to merge the construction of the rational approximation with the
application of the rational Krylov method.

When solving large-scale nonlinear eigenvalue problems, it is
essential to exploit the structure of the linearized problem in order
to overcome the large memory and computational costs. This is the
primary goal of \textit{compact rational Krylov} (CORK) framework
proposed in~\cite{VanBMM15}.  In this general framework it is assumed
that the approximation of $T(z)$, whether polynomial or
rational, is put in the form $\widetilde T(z) =
\sum_{i=0}^{d-1}(A_i - \lambda B_i)f_i(z), $ with the
scalar functions $f_i(z)$ satisfying a linear recurrence
relation $ M_d \textbf{f}_d = z N_d \textbf{f}_d(z), $
where $\textbf{f}_d = \big[ f_0(z), f_1(z), \ldots,
  f_{d-1}(z) \big]^T$ and $M_d, N_d \in \CC^{(d-1) \times
  d}$.
Then the associated companion linearization, often referred as
\textit{CORK linearization}, turns out to have a particularly
interesting structure, namely: $\textbf{L}(z) = \textbf{A} -
z \textbf{B}$, where
\begin{equation}
\label{eq:CORKpencil}
\textbf{A} = \left[ \begin{array}{cccc}
              A_0 & A_1 & \cdots & A_{d-1}\\
              \hline
              \multicolumn{4}{c}{M_d \otimes I_n}
             \end{array} \right], \qquad
\textbf{B} = \left[ \begin{array}{cccc}
              B_0 & B_1 & \cdots & B_{d-1}\\
              \hline
              \multicolumn{4}{c}{N_d \otimes I_n}
             \end{array} \right].
\end{equation}
If $\widetilde T(z)$ is a matrix polynomial, the pencil
\eqref{eq:CORKpencil} is the classical companion-like
linearization~\cite{GohLR09}.
The Kronecker structure of the CORK pencil~\eqref{eq:CORKpencil}
allows to efficiently solve the associated generalized eigenvalue
problem using a rational Krylov method~\cite{Ruh84,Ruh98}. With a
compact representation $v = (I\otimes Q)u$ (Arnoldi
decomposition~\cite{SuZB08}) of the right Krylov vectors $v$, with $Q$
having orthonormal columns and $u$ being of much smaller dimension
than $v$, CORK can be characterized as a two-step procedure similar to
the two-level orthogonal Arnoldi (TOAR)~\cite{LuSB16}. The
orthogonalization step involving vectors of original problem size $n$
followed by a standard rational Krylov step on a projected matrix
pencil, significantly lower the overall memory and computational costs
of the CORK algorithm. Moreover, both the implicit restarting
procedure and utilizing low-rank structure of coefficient matrices
$A_i$ and $B_i$  make CORK method highly
competitive when it comes to solving efficiently and reliably
challenging nonlinear eigenvalue
problems~\cite{VanBONYBGKLNX18}.
Extensions and refinements of the CORK framework were proposed in
\cite{LieMT18,RobVVD17,DopGP18,SuB11,LieMT18,LiePVM18,NakST18}.
 
The method we propose is in the same family as those described above but there are distinctions. First, we rely entirely on the
Cauchy integral formula to approximate $T(z)$ directly. As will be seen
in Section~\ref{sec:alg},  the resulting
approximation is a rational function which includes simple terms of the form
$B_i /(z-\sigma_i)$. 
As it was already made clear above, an essential part of the methods for solving
rational eigenvalue problems, is exploiting a good linearization.
A large volume of work has been devoted to linearizations both from a
 practical and from a theoretical viewpoint
 \cite{AmiCL09,AntV04,DopPVD19,DopLPVD18,FasS18,Fie03,GohLR09,HigMMT06,
   MacMT15,Mac06,Mac13,MacMMM06,DeTD08,DeTDM10,DeTDM14}.
  We propose a simple and natural linearization technique which lends itself to
  efficient calculations.

The method we propose in this paper exploits a projection technique that employs vectors
of length $n$, the size of the original problem.  A slight
modification of the method was recently used to solve 
a rather challenging  nonlinear eigenvalue  problem that arises from applying
the   Boundary Element Method    in acoustics 
\cite{ElGuide19}. The experiments proposed at the end of the paper are 
presented primarily for illustrating certain characteristics of the method
including its  versatility and ease of use.

\section{A rational approximation approach for NLEVPs} \label{sec:alg}

Following Kressner \cite{Kre09}, we  limit ourselves to problems in which:
\eq{eq:Tofzm1}
T(z) =  f_0 (z) A_0 + f_1(z) A_1 + f_2(z) A_2 +\ldots + f_p(z) A_p,
\en
with holomorphic  functions $f_0,\ldots, f_p: \Omega  \rightarrow \CC$
and constant coefficient matrices $A_0, \ldots, A_p$.
In what follows, we will call $\Gamma $ the boundary of $\Omega$. 
Since $T\in  H(\Omega, \CC)$,  it can  always be  written in  the form
\eqref{eq:Tofzm1} with  at most $p  = n^2$ terms. Note also that
this representation  is not unique.  Furthermore, it is very  common in
practice to have $f_0(z)= 1$ and $f_1(z) = z$, so instead of the above
we will assume  the form: \eq{eq:Tofz} T(z)  = - B_0 + z  A_0 + f_1(z)
A_1 + \ldots + f_p(z) A_p .  \en
As it turns out many of the nonlinear
eigenvalue problems  encountered in applications
are  set in this  form.   We are
interested in  all eigenvalues  that are  located in  a region  of the
complex plane enclosed by curve $\Gamma$. 

The main assumption we make is  that each of the holomorphic functions
$f_j: \Omega \rightarrow \CC$ in representation \nref{eq:Tofz} is well
approximated by a rational function of the form:
\eq{eq:fj}
f_j(z)  \approx \sum_{i=1}^m \frac{\alpha_{ij} }{z - \sigma_i}.
\en
Note that the set of $m$ poles $\sigma_i$'s is the same for all of the functions.
This setting comes from a Cauchy integral representation of each function
inside a region limited by a contour $\Gamma$:
\eq{eq:fj0}
f_j(z)  = -\frac{1}{2 i \pi} \int_\Gamma \frac{f_j(t) }{z-t}\  dt, \quad
\quad z \in \Omega. 
\en
Using numerical  quadrature, \eqref{eq:fj0}  is then  approximated into
\nref{eq:fj}, where the $\sigma_i$'s  are quadrature points located on
the contour $\Gamma$. Substituting \nref{eq:fj} into \nref{eq:Tofz} yields the following approximation $\widetilde T(z)$ of $T(z)$:
\begin{align}  
\widetilde T(z) &= -B_0+ z A_0 + 
\sum_{j=1}^p \sum_{i=1}^m
                  \frac{\alpha_{ij} }{z - \sigma_i} A_j =
                  -B_0+ z A_0 + \sum_{i=1}^m \
\frac{ \sum_{j=0}^p \alpha_{ij} A_j }{z - \sigma_i}  \nonumber \\
        &\equiv  \ -B_0+ z A_0 + 
\sum_{i=1}^m \
\frac{ B_i }{z - \sigma_i},  \label{eq:Tofz1}
\end{align}
where we have set
\eq{eq:Bi}
B_i = \sum_{j=0}^p \alpha_{ij} A_j, \qquad i=1, \ldots, m.
\en
Given   the  approximation   $\widetilde   T(z)$  of   $T(z)$ shown   in
\nref{eq:Tofz1},  the problem  we  need  to solve  can  be written  as
follows:
\eq{eq:NLR} \Big(-B_0 + \lambda A_0 + \sum_{i=1}^m \
\frac{ B_i }{\lambda  - \sigma_i} \Big) u = 0 \ .
\en
We will often refer to this problem
as  a \emph{surrogate}  for problem~\nref{eq:nlin0}. It will  be seen
that  if each  of the  functions $f_j$  is well  approximated then this
surrogate problem will provide 
good approximations to eigenvalues of $T(z)$ located inside the
contour but that are not close to the poles. 
  
For a given complex number $z$, and a given vector $u$, we define
\eq{eq:vi}
v_i = \frac{ u}{\sigma_i - z} , \qquad i=1,\ldots, m .
\en
Then we can write
\begin{align}
\label{eq:Tzu}
  \widetilde T(z) u & = \Big( -B_0 + zA_0 + \sum_{i=1}^m \frac{ B_i }{z - \sigma_i}\Big) u\\ 
                  & = (-B_0 + zA_0)u -  \sum_{i=1}^m B_iv_i,
\end{align}
which can be expressed in  block form as follows: 
\eq{eq:Block}
\begin{bmatrix} 
(z - \sigma_1)I &                &               &                 &     I          \\
                & (z- \sigma_2)I &               &                 &     I           \\        
                &                & \ddots        &                & \vdots           \\
                &                &         &  (z-\sigma_m)I    & I           \\
                -B_1             & -B_2            & \ldots        &   -B_m          & z A_0 - B_0 
              \end{bmatrix}
               w = 0,
              \quad 
               w = 
              \begin{bmatrix}
                v_1 \\ v_2 \\ \vdots \\ v_m \\ u
              \end{bmatrix} \ .               
              \en
Since \eqref{eq:Block} is of the form $(z \cmm  - \cma ) w = 0$, solutions of the  surrogate eigenvalue problem $\widetilde T(\lambda)  u = 0$ can be obtained by
solving the linear eigenvalue problem 
\eq{eq:sc}
{\cal A}  w = \lambda {\cal M} w,
\en
with
\eq{eq:sc1}
\cmm = 
\begin{bmatrix} 
    I  &         &            &                &                   \\
       &       I &            &                &               \\        
       &         & \ddots     &                &            \\
       &         &            &   \ddots       &            \\
       &         &            &                &   A_0
     \end{bmatrix}, \qquad      
\cma = 
\begin{bmatrix} 
  \sigma_1 I &                &               &                 &      -I          \\
                &   \sigma_2 I &               &                 &      -I          \\        
                &                & \ddots        &                & \vdots           \\
                &                &         &    \sigma_m I            &   -I           \\
               B_1              & B_2            & \ldots        &   B_m          &  B_0 
                
             \end{bmatrix} . 
\en 

Note  that  the  diagonal  block with  the  $\sigma_i$'s  is  of
dimension $(mn) \times  (mn)$ and the matrices $\cmm$,  and $\cma$ are
each of dimension  $(mn+n) \times  (mn+n)$.  The  above formalism  provides a
basis for developing algorithms  to extract approximate eigenvalues of
the  original  problem  \nref{eq:Tofz}, however, we  will  not
store the matrices $\cma$ and $\cmm$ explicitly.

\subsection{Shift-and-invert on full system} 
Since  we are interested  in interior eigenvalues,  it is
imperative to  exploit a shift-and-invert strategy,  which consists of
replacing  the   solution  of   problem (\ref{eq:sc}) by
\eq{eq:sc2}  {\cal H}  w  =
\frac{1}{\lambda-\sigma} w,\qquad {\cal H}:= \left({\cal  A }-\sigma {\cal M}
\right)^{-1}{\cal M},
\en
where $\sigma$ is a certain shift. 
In the following we will show how to exploit the structure of the linearized
problem (\ref{eq:sc} -- \ref{eq:sc1}) to perform one step of shifted inverse
iteration. This is a basic ingredient which will be utilized in
various ways later.
The shifted inverse iterations require solving linear systems
with a shifted matrix $(\cma  - \sigma \cmm )$ at each step.
To solve such systems, we can exploit a 
standard block LU factorization that takes advantage of the specific patterns of ${\cal A  }$ and ${\cal M  }$.
As a consequence of \eqref{eq:sc1}, all these systems are of the form 
\eq{eq:sc3}
\begin{bmatrix} 
	D & F \\
	B^T & B_0
\end{bmatrix} \begin{bmatrix} x \\ y \end{bmatrix} = \begin{bmatrix} a \\ b \end{bmatrix} ,
\en
where $D$ is  diagonal. Consider the block LU factorization of matrix $\cma$: 
\eq{eq:sc4}
L=\begin{bmatrix} 
	I & 0 \\
	B^T D^{-1} & I
\end{bmatrix}, \qquad 
U=\begin{bmatrix} 
D & F \\
0 & S
\end{bmatrix},
\en
where $S = B_0 - B^T  D^{-1}F$ is the Schur complement of the
block $B_0$. Solving \nref{eq:sc3} requires first solving the system 
$Sy=b-B^T D^{-1}a$ and then substituting $y$ in the first part of \nref{eq:sc3} to obtain
$x = D \inv( a - F y)$.
Next we examine carefully the iterates of the inverse power method
(inverse iteration) or shift-and-invert method to see how they can be
integrated into a projection-type procedure.
We write the iterates obtained from an
inverse iteration procedure as $w\up{k} = \left[ v\up{k} \ ; \ u\up{k} \right] $
where we used {\sc Matlab} notation $[x \ ; \ y ]$ to
denote a vector that consists of $x$ stacked above $y$.

  Each step of the shifted
  inverse power method (inverse iteration) requires solving the linear system
\eq{eq:invpow}
 (\cma - \sigma \cmm)  w\up{k+1} = \ca{M} w\up{k} \quad \mbox{ or } \quad
 (\cma - \sigma \cmm) \xbmatrix{ v\up{k+1} \\ u\up{k+1} } = \xbmatrix{ v\up{k} \\ A_0 u\up{k}}.
\en
The  system \nref{eq:invpow} is of the same form as that in \nref{eq:sc3}
and it can be solved  the same way, resulting in
the following steps~: 
\begin{align}
  u\up{k+1} & = S(\sigma)\inv\big(A_0 u\up{k} - B^T (D-\sigma I)\inv v\up{k}\big),
  \label{eq:wkp1}\\
 v\up{k+1} & = (D-\sigma I) \inv (v\up{k} -F u\up{k+1}) .  \label{eq:vkp1}
\end{align} 
Algorithm~\ref{alg:shinv} shows an implementation of a single step of
this scheme, in which the operations $(D - \sigma I) \inv v\up{k}$ are
translated by scalings on each of the subvectors.


\begin{algorithm}[h!]
\label{alg:shinv}
\SetKwInOut{Input}{Input}\SetKwInOut{Output}{Output}
\Input{$ D, F,B^T \mbox{ and } B_0$ as defined in \eqref{eq:sc3},
  $w\up{k} = [ v\up{k} ; \ u\up{k} ]  $ }
\Output{ $w\up{k+1} = [ v\up{k+1} ; \ u\up{k+1} ] $}
\BlankLine
Compute $b = A_0 u\up{k} - B^T(D -\sigma I)^{-1}v\up{k}
= A_0 u\up{k} - \sum_{i=1}^m (\sigma_i - \sigma)\inv B_i v_i\up{k};  $\\
Solve $S(\sigma) u\up{k+1} = b$, with Schur complement matrix $S(\sigma)$; \\ 
Set $v_i\up{k+1} = [v_i \up{k} - u\up{k+1}]/(\sigma_i - \sigma)$ for $i=1,\ldots, m$; \\
\Return{ $[v\up{k+1}; \ u\up{k+1}]$}
\caption{Single step of shifted inverse iteration}
\end{algorithm}\DecMargin{1em}

The second part of line~1 of the algorithm executes the operation
$B^T (D-\sigma I) \inv $, exploiting
the block structure. Note that the superscripts $k$ correspond to the iteration
number while the subscripts $i$ correspond to the blocks in the vector $v\up{k}$.
Similarly, line~3 unfolds the operation represented by \nref{eq:vkp1} into blocks.
 

In  theory, the  above  single  vector procedure  can now  be applied
in combination with a Krylov subspace method, e.g., the Arnoldi procedure,
to yield a shift-and-invert Arnoldi method applied to the large system
(\ref{eq:sc}--\ref{eq:sc1}). The issue with this approach is that we need to store potentially  many vectors,
 each of  length $(m+1)n$ because  the Arnoldi  procedure
 requires saving all  previous basis vectors in a  given iteration. If
 $m$  is  large,  this will  lead  to  a  big  demand of  memory.
 An alternative  available is the subspace iteration (SI) method which is the key
 ingredient used in the FEAST algorithm~\cite{FEAST}. In contrast with the
 shift-and-invert Arnoldi, SI has the attractive feature of requiring a fixed number of 
 vectors and is known for its robustness.
 Although the basic SI algorithm still has
 the drawback of employing long vectors, we will now discuss a variant to circumvent this issue.

\subsection{Projection method on the reduced system}\label{sec:reduc} 
This  section describes  a projection  method that  works in  $\CC^n$,
i.e., it only requires vectors of length $n$, the size of the original
problem  \eqref{eq:nlin0}.   Let  us consider  the  surrogate  problem
\nref{eq:NLR}. For now, we assume that  we are able to find a subspace
$\cU$, which  contains good approximations to  eigenvectors of problem
\nref{eq:nlin0}, where $T(z)$  is of the form  \nref{eq:Tofz}. For the
sake of  simplicity of the  presentation, we will focus  on orthogonal
projection methods here, noting that generalizations to non-orthogonal
methods are straightforward.

Let $U = [u_1, u_2, \ldots, u_\nu]$ be an orthonormal basis of $\cU$. An approximate eigenvector $\widetilde u$ can be expressed in this basis
as $\widetilde u = U y$, with $y \in \CC^\nu$.  Then, a Rayleigh-Ritz procedure applied to \nref{eq:nlin0} yields a projected problem:
\eq{eq:RR1}
U^H 
\Big( -B_0+ z A_0 + \sum_{i=1}^m \
  \frac{ B_i }{z - \sigma_i} \Big) U y = 0.
\en 
This leads to a nonlinear eigenvalue problem in $\CC^\nu$, namely:
\eq{eq:RR2}
\Big(  - \widetilde B_0+ z \widetilde A_0 + \sum_{i=1}^m \
  \frac{ \widetilde  B_i }{z - \sigma_i} \Big) y = 0,
\en 
in which $\widetilde A_0 = U^H A_0 U$, and
$\widetilde B_i = U^H B_i U, $ for $i=0, 1, \ldots, m$. When $\nu$ is small this can be handled
by solving problem (\ref{eq:sc} -- \ref{eq:sc1}) directly by standard methods,
even if $m$ is fairly large.

The question that still remains to be answered is how to obtain a good
subspace $\cU$ to perform the projection method.  Here, we will rely
once more on the linear form (\ref{eq:sc} -- \ref{eq:sc1}) and the
vectors obtained from a shift-and-invert iteration.  To motivate our
approach of obtaining a good basis $U$, suppose we wish to perform a
single step  of the subspace iteration algorithm applied with
shift-and-invert. At a given step, we would have a certain basis $W =
[w_1, w_2, \ldots, w_\nu] $ of the current subspace and we apply say
$q$ steps of Algorithm~\ref{alg:shinv} to each column $w_j$.  Each
vector $w_j $ is of the form $w_j = [v_j; \ u_j]$ using  previous
notation.  We perform $q$ such steps of the shift-and-invert method and
denote the $k$-th iterate by $w_j\up{k} = [v_j\up{k}; \ u_j\up{k} ] $.
After a column is processed by these $q$ steps 
\emph{we
discard its top part and extract the $U$-part that will be used
for the projection process.}  In other words, the $j$-th column of the desired
$U$ is simply the bottom part of the vector resulting from $q$ steps
of the shift-and-invert procedure applied to the $j$-th column of $W$.
This is done
\emph{one column at a time} and therefore \emph{we only have to keep one
  vector of length $(m+1)n$.}  Doing this for each  column of $U$ in succession
constitutes one step of what we call ``\emph{reduced subspace  iteration}''.
The resulting technique is summarized in Algorithm~\ref{alg:subsit} which invokes
a \texttt{restart\_vec} function  in line~3 to select a vector $w$ for the shift-and-invert
iteration. This is discussed next.

Since we would like to avoid keeping $\nu$ vectors of length $(m+1) n$, the vector
$w$ in line~3 of the algorithm, which  is used to generate the  $j$-th column of $U$,  is selected as follows.
At the very first outer iteration ($\ell==1$), $w$ is  selected to be a fresh random vector
for each $j$.
In the second (outer)  iteration and thereafter, $w=[v;  \ u]$ should ideally be taken
to  be  an  approximate  eigenvector  of  (\ref{eq:sc} -- \ref{eq:sc1}).
After  the Rayleigh-Ritz  projection  is performed  in lines~6--7, we obtain
$\nu$ approximate  eigenpairs $(\widetilde \lambda_j,  \widetilde u_j)$,
for $j=1,\ldots, \nu$ for the surrogate
problem~\eqref{eq:NLR}.  Each  of the vectors $\widetilde u_j$ 
yields the bottom ($U$-part)  of some approximate
eigenvector $\widetilde w$ associated with the eigenvalue $\widetilde \lambda_j$,
but  the corresponding
$\widetilde v$ vector  (top part of $\widetilde w$)  is not available.
This  is remedied by
relying  on the  relation  \nref{eq:vi},   i.e.,  we  define  the vector $v$
by setting each of its $i$-th components to be
$v_{i} = \widetilde u_j /(\sigma_i - \widetilde \lambda_j)$: 
\[
w = \texttt{restart\_vec} (j) =
\left\{ \begin{array}{rll} 
   \text{if}  \quad    \ell==1 :& \texttt{randn}((m+1)n,1), \\
   \text{else :}           & [v; \widetilde u_j] \ \text{with} \ \ v_i =
   \frac{\widetilde u_j}{\sigma_i - \widetilde \lambda_j },  \ i=1,\ldots, m \ . 
\end{array} \right.  
\]
Note that we preferred to keep the notation
simple by avoiding the extra index $j$ to the vector $v$
(adding the $j$ index would put each subvector
$v_i$ in the form  $v_{ij}$).

\IncMargin{1em}
\begin{algorithm}[h!]
  \SetKwFunction{mgs}{mgs}
  \SetKwInOut{Input}{Input}
  \SetKwInOut{Output}{Output}
  \SetKwInOut{Start}{Start}
  \Input{Subspace dimension $\nu$; $q$; 
          Number of eigenvalues $k$ (with  $k\le \nu$)}
        \Output{$\lambda_1,   \ldots,  \lambda_k$, $U_k$}
        \For{$\ell=1,2, \ldots, $ until convergence:}{ 
          \For{$j=1:\nu$}{
        Select $w = \texttt{restart\_vec} (j)$  \;
           Run $q$ steps of Algorithm~\ref{alg:shinv} starting  with $w$\;
           If $w = [v;\  u]$ is the last iterate, then set $U(:,j)  = u$\;
        }       
          Use  $U$   to  compute   $\widetilde  B_0$,
        $\widetilde   A_0$  and   $\widetilde  B_i,   \ i=1, \ldots,m$  from
        \eqref{eq:RR2}\;
        Compute eigenpairs $\lambda_j, y_j$ of rational problem  \eqref{eq:RR2}, and
        the associated Ritz vectors $u_j = U y_j$ for $j=1,\ldots, \nu$\;
        }
	\Return{$\lambda_1, \ldots, \lambda_k$ and eigenvector matrix $U_k$}
	\caption{Reduced  Subspace Iteration}\label{alg:subsit}
\end{algorithm}\DecMargin{1em}

\subsection{A gradual precision procedure}
An attractive feature of the procedure described in the previous
section is that sometimes it is possible to select the quadrature points
in such a way that several approximations are available from the same
set of (fine) quadrature points. This can be exploited in
Algorithm~\ref{alg:subsit} by using gradually more accurate rational
approximations as the outer loop progresses.  For example, if we use a
total of $16$ points in a trapezoidal rule, as illustrated in
Figure~\ref{fig:mlev}, we can use the set of points $0, 4, 8, 12$ at the
very first iteration, then the set of all even points at the next, and
then all points at the $3$rd outer iteration.  In a multilevel
generalization of this scheme with $L$ outer iterations in Algorithm~\ref{alg:subsit} 
-- we will use $n_0 \times 2^{\ell - 1}$ points at level $\ell$.
Each set of points will lead to a pair of
matrices $\ca{A}\up{\ell},\ca{M}\up{\ell}$ in the linearization
\ref{eq:sc} that increase in size as $\ell$ increases.
The idea that is  exploited here is that the two consecutive rational approximations of $T(z)$
are close to each other, so initial vectors for the fine approximations (more quadrature points)
can be obtained from 
coarser ones (fewer quadrature points) to build approximate eigenvectors in a progressive way.

To achieve this, the only change that is needed in
Algorithm~\ref{alg:subsit} is to perform  the shifted inverse iteration in
line~4 with the pair $\ca{A}\up{\ell},\ca{M}\up{\ell}$ instead
of $\ca{A},\ca{M}$.  If more than $L$ iterations are needed, one can
continue iterating with the most accurate pair, i.e., the last pair
$\ca{A}\up{L},\ca{M}\up{L}$.  The primary motivation here is to reduce
the number of outer iterations required when iterating with the most
accurate pair.  In the case when the sets of quadrature points are
nested, e.g. for the trapezoidal rule, the storage and
computational costs are minimized as is discussed next.

\begin{figure}[H]
  \centerline{
    \includegraphics[width=0.3\textwidth]{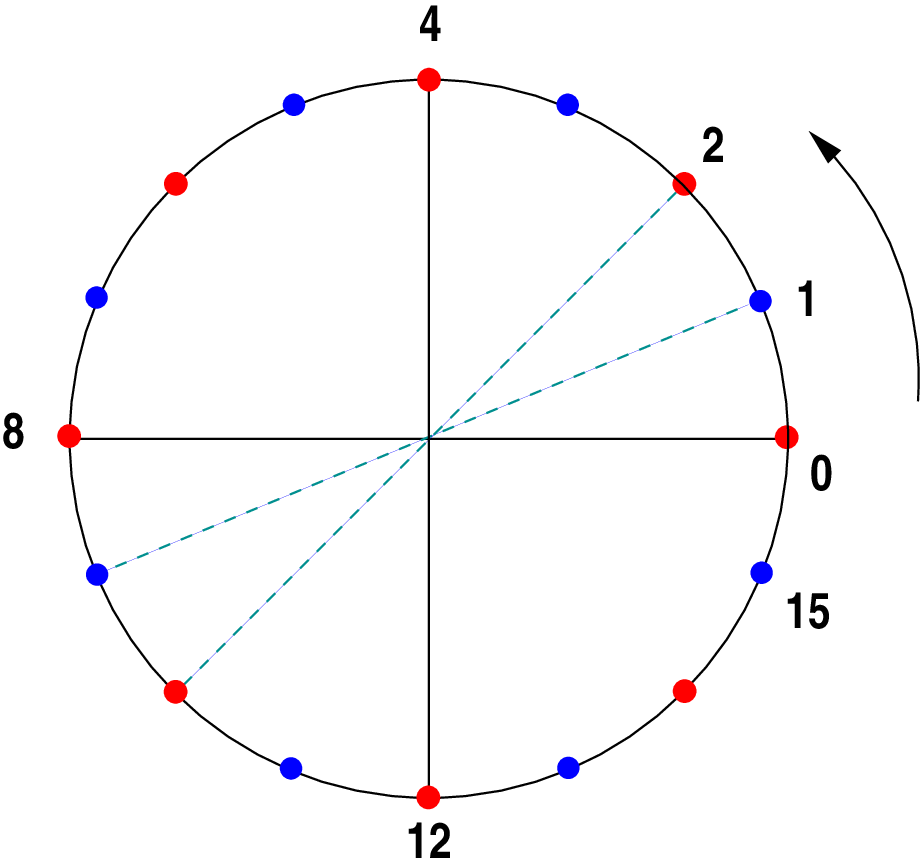}
  }
  \caption{Using 16  quadrature points for the trapezoidal rule.
    The set of nodes in the 4-point quadrature is a subset of 
    the nodes of the 8-point quadrature which is itself a subset of the
    the nodes of the 16-point quadrature rule. }\label{fig:mlev}
\end{figure}

To implement this scheme efficiently, it is best to
write the approximation of $f_j(z)$  as
\eq{eq:fjM}
f_j(z)  \approx  \sum_{i=1}^m \frac{\omega_{i}  f_j (\sigma_i) }{z - \sigma_i}.
\en
Then  expression \eqref{eq:Tofz1}  becomes:
\begin{align}  
\widehat T(z) &= -B_0+ z A_0 + 
\sum_{j=1}^p \sum_{i=1}^m
                  \frac{\omega_{i}  f_j(\sigma_i) }{z - \sigma_i} A_j =
                  -B_0+ z A_0 + \sum_{i=1}^m \ \omega_i
\frac{ \sum_{j=0}^p f_j (\sigma_i)  A_j }{z - \sigma_i}  \nonumber \\
        &\equiv  \ -B_0+ z A_0 + 
\sum_{i=1}^m \ \omega_i 
\frac{ \widehat B_i }{z - \sigma_i},  \label{eq:TofzMM}
\end{align}
where we now set:
\eq{eq:TBi} \widehat B_i = \sum_{j=0}^p f_j(\sigma_i) A_j, \qquad
i=1, \ldots, m.  \en In other words $B_i = \omega_i \widehat
B_i$.
Note here that although
the weights $\omega_i$ depend on the level $\ell$, the matrices
$\widehat B_i$ are fixed.  In fact when $p$ is small - as is
commonly the case - it is best not to store the $\widehat{B}_i$'s
(or the $B_i$'s). Indeed, the matrix $\widehat{B}_i$ (as well as 
$B_i$) is  only invoked for matrix-vector products (``matvec's'')
which can be carried out with $p$ matvecs using  the original matrices
$A_i$ plus a linear combination of $p$ vectors. If $p$ is large, then
it may be advantageous to compute and store the $\widehat B_i$'s.

\section{Theoretical considerations} \label{sec:theory} 
Let us consider problem (\ref{eq:sc} -- \ref{eq:sc1}) under the
simplified assumption that $A_0 = I$. This is equivalent to having an
$A_0$ that is invertible, since in this case we can multiply
equation~\nref{eq:Tofz} by $A_0 \inv $ to reach the desired form in
which $A_0 = I$. Hence, without loss of generality, we assume $\cmm =
I$.

\subsection{Characterization of eigenvalues of $\cma$} 
Now, we would like to examine all the eigenvalues of matrix
$\cma$. For this, we consider the characteristic polynomial of matrix
$\cma$. We write $\cma - z I $ as follows: 
\eq{eq:cmamz}
\cma - z I = \begin{bmatrix} D - z I & F \\ B^T & B_0 - z I  \end{bmatrix},
\en
where, referring to \nref{eq:sc1}, we see that 
$D$ is $(mn) \times (mn)$, and
$F$ and $B$ are both $(mn) \times n$.
Then, when $D-z I$ is invertible,
i.e., when $z$ is different from all the $\sigma_i$'s, 
the block LU factorization of $\cma - z I$ is given as:
\eq{eq:cmamz1}
\cma - zI = \begin{bmatrix}  
	I & 0 \\
	B^T (D-zI)^{-1} & I
        \end{bmatrix} 
        \begin{bmatrix} 
        D-z I  & F \\
0 & S(z) 
\end{bmatrix},
\en
where $S(z) $ is the (spectral) Schur complement:
\eq{eq:Schur}
S(z) \equiv B_0 - zI - B^T (D-z I) \inv F =
B_0 - zI + \sum_{i=1}^m \frac{B_i}{\sigma_i - z}.
\en 
Hence, when $z$ is not a pole, then
$\det (\cma - z I) = \det(S(z)) \det(D- z I).$
Moreover, by comparing equations~\nref{eq:Tofz1} and \nref{eq:Schur}, we
observe that $S(z) = - \widetilde T(z)$.

Assume now that $z$ is a pole, e.g., without loss of generality let $z = \sigma_1$.
In this case, we can use a continuity argument. Indeed,
$\det (\cma - z I) $ is a continuous function and therefore we can define $\det(\cma - z I)$ as the limit:
\begin{align*}
  \det(\cma - \sigma_1 I) &= \lim_{z \tendsto \sigma_1}
  \det\left[B_0 - zI + \sum_{i=1}^m \frac{B_i}{\sigma_i - z}\right] \prod_{i=1}^m (\sigma_i - z)^n\\
                   & = \lim_{z \tendsto \sigma_1} \det\left[ (\sigma_1-z) (B_0 - z I)
                     + \sum_{i=1}^m \frac{\sigma_1 -z}{\sigma_i - z} B_i\right] \prod_{i=2}^m (\sigma_i - z)^n\\
                   & =  \det ( B_1 ) \prod_{i=2}^m (\sigma_i - \sigma_1)^n . 
\end{align*}
Note that a second approach to prove the above relation  is to observe that when
$z = \sigma_1$, then the top left $n\times n$ block of $\cma - z I$ is a zero block and this
can be exploited to expand the determinant.
This result can be generalized to any other $\sigma_i$, and therefore we can state the following lemma.
  \begin{lemma}\label{lem:det} The following equality holds~:
    \eq{eq:detA}
    \det (\cma - z I) =
    \left\{ \begin{array}{lcl}
              \det (S(z)) \prod_{j = 1}^m (\sigma_j - z)^n & if & z \ne \sigma_i \ \ i = 1,\ldots m, \\
              \det ( B_i ) \prod_{j\ne  i}^m (\sigma_j - \sigma_i)^n
                           & if & z = \sigma_i.
            \end{array}
          \right.
          \en
    \end{lemma}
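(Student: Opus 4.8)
\emph{The non-pole case.} When $z\neq\sigma_i$ for all $i$, the block $D-zI$ is nonsingular, since by \eqref{eq:sc1} it is block diagonal with diagonal blocks $(\sigma_j-z)I$ of size $n\times n$. Hence the block LU factorization \eqref{eq:cmamz1} is valid, and taking determinants of its two triangular factors (the first has all ones on the diagonal, so determinant $1$) gives $\det(\cma-zI)=\det(D-zI)\,\det(S(z))$. Since $\det(D-zI)=\prod_{j=1}^m(\sigma_j-z)^n$, this is exactly the first line of \eqref{eq:detA}, with $S(z)$ the Schur complement of \eqref{eq:Schur}.

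\emph{The pole case.} Fix an index $i$; by relabeling the poles it suffices to treat $i=1$, exactly as in the displayed computation preceding the lemma. The key structural fact is that $p(z):=\det(\cma-zI)$ is a polynomial in $z$, hence continuous on $\CC$, so its value at $\sigma_1$ equals the limit as $z\to\sigma_1$ of the right-hand side of the first line of \eqref{eq:detA}. To compute that limit I pull one factor $(\sigma_1-z)^n$ inside the determinant:
\[
\det(S(z))\prod_{j=1}^m(\sigma_j-z)^n=\det\!\big((\sigma_1-z)S(z)\big)\prod_{j=2}^m(\sigma_j-z)^n ,
\]
and then, using \eqref{eq:Schur},
\[
(\sigma_1-z)S(z)=(\sigma_1-z)(B_0-zI)+B_1+\sum_{j=2}^m\frac{\sigma_1-z}{\sigma_j-z}\,B_j .
\]
As $z\to\sigma_1$, every term on the right except $B_1$ carries a factor $\sigma_1-z\to 0$ (and the denominators $\sigma_j-z\to\sigma_j-\sigma_1\neq 0$ for $j\ge 2$), so $(\sigma_1-z)S(z)\to B_1$; likewise $\prod_{j=2}^m(\sigma_j-z)^n\to\prod_{j=2}^m(\sigma_j-\sigma_1)^n$. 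Passing to the limit yields $p(\sigma_1)=\det(B_1)\prod_{j=2}^m(\sigma_j-\sigma_1)^n$, which is the second line of \eqref{eq:detA}. An alternative that avoids limits is the ``second approach'' mentioned in the text: when $z=\sigma_1$ the top-left $n\times n$ block of $\cma-zI$ is zero, and after a block permutation bringing the $\sigma_1$-block and the trailing block into a leading $2n\times 2n$ position one expands the determinant along that sparse structure.

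\emph{Main obstacle.} There is no deep difficulty here; essentially everything needed was already assembled above. The only point requiring genuine care is the pole case: one must verify that the apparent singularity of $\det(S(z))$ at $z=\sigma_1$ is cancelled \emph{cleanly, without assuming $B_1$ invertible}. This is precisely why the argument is routed through $\det\big((\sigma_1-z)S(z)\big)$ rather than through $\det(S(z))$ directly, and why continuity of the polynomial $p(z)$ is invoked (so the limit genuinely computes $p(\sigma_1)$) instead of manipulating $\det(S(z))$ as a meromorphic object.
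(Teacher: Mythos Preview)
Your proposal is correct and follows essentially the same route as the paper: the block LU factorization for the non-pole case, and the continuity/limit argument with the factor $(\sigma_1-z)^n$ pulled inside the determinant for the pole case, including the same remark about the alternative direct expansion when the top-left block vanishes. Your added justification that $p(z)$ is a polynomial (hence continuous) and your comment about not assuming $B_1$ invertible make explicit what the paper leaves implicit, but the argument is otherwise identical.
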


We denote by $e_i$ the $i$-th canonical basis vector of the vector space $\CC^{m+1}$ 
and by $\otimes$ the \emph{Kronecker} product (operator \texttt{kron} in {\sc Matlab}).
The following corollary is an immediate consequence of Lemma \ref{lem:det}.

    \begin{corollary} If all the matrices $B_i, i=0,\ldots,m$ are nonsingular, then
      the eigenvalues of \nref{eq:NLR} are the same as the eigenvalues of matrix $\cma$.
      If a matrix $B_i$ is singular and $u$ is an associated null vector, then
       $\sigma_i$ is an eigenvalue of $\cma$ and $e_i \otimes u$ is an associated eigenvector.
     \end{corollary}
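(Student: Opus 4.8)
The statement is an essentially immediate consequence of Lemma~\ref{lem:det} together with the identity $S(z) = -\widetilde T(z)$ recorded after \nref{eq:Schur}; the plan is simply to unwind the two branches of \nref{eq:detA}. Throughout I use that the quadrature nodes $\sigma_1,\ldots,\sigma_m$ are distinct, so that $\prod_{j\ne i}(\sigma_j-\sigma_i)^n\ne 0$ for every $i$.

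\textbf{First claim.} By definition the eigenvalues of the surrogate problem \nref{eq:NLR} are the points $z$, distinct from all the poles $\sigma_i$ (where $\widetilde T$ is holomorphic), at which $\widetilde T(z)$ is singular, i.e., the zeros of $\det \widetilde T(z) = \pm\det S(z)$. If every $B_i$ is nonsingular, then the second branch of \nref{eq:detA} gives $\det(\cma-\sigma_i I) = \det(B_i)\prod_{j\ne i}(\sigma_j-\sigma_i)^n\ne 0$ for $i=1,\ldots,m$, so no pole is an eigenvalue of $\cma$. For any other $z$ the first branch of \nref{eq:detA} reads $\det(\cma-zI)=\det(S(z))\prod_{j=1}^m(\sigma_j-z)^n$ with the product nonzero, hence $\det(\cma-zI)=0$ if and only if $\det S(z)=0$. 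Combining the two facts, the spectrum of $\cma$ is exactly the zero set of $\det S(z)$ away from the poles, which is precisely the set of eigenvalues of \nref{eq:NLR}.

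\textbf{Second claim.} Suppose some $B_i$ with $1\le i\le m$ is singular and $B_i u = 0$, $u\ne 0$. That $\sigma_i$ is an eigenvalue of $\cma$ is immediate from the second branch of \nref{eq:detA}, whose right-hand side carries the factor $\det(B_i)=0$. To identify the eigenvector I would multiply out $\cma\,(e_i\otimes u)$ using the block form \nref{eq:sc1}: the vector $e_i\otimes u$ has $i$-th block equal to $u$ and all other blocks, including the bottom (``$u$-'') block, equal to zero. Block row $k\le m$ of $\cma$ then returns $\sigma_k$ times the $k$-th block minus the bottom block, which is $\sigma_i u$ when $k=i$ and $0$ otherwise; the bottom block row returns $\sum_{j=1}^m B_j(e_i\otimes u)_j + B_0\cdot 0 = B_i u = 0$. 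Hence $\cma(e_i\otimes u) = \sigma_i(e_i\otimes u)$, and since $u\ne 0$ this vector is a genuine eigenvector.

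I do not expect a real obstacle here. The only points deserving a little care are (i) that the eigenvalues of \nref{eq:NLR} are understood to lie away from the poles $\sigma_i$, where $\widetilde T$ is not defined, so the correspondence with the spectrum of $\cma$ concerns the non-pole part of that spectrum; and (ii) the use of distinctness of the $\sigma_i$ to ensure the products $\prod_{j\ne i}(\sigma_j-\sigma_i)^n$ do not vanish. For the second half it is cleanest to verify the eigenvector relation by the direct block multiplication above rather than through determinants, as also suggested by the ``second approach'' remark preceding the lemma.
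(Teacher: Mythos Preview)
Your proposal is correct and matches the paper's approach: the paper simply declares the corollary ``an immediate consequence of Lemma~\ref{lem:det}'' and gives no further proof, and what you have written is exactly the natural unpacking of that statement via the two branches of \nref{eq:detA}. Your direct block-multiplication check that $\cma(e_i\otimes u)=\sigma_i(e_i\otimes u)$ is the right way to exhibit the eigenvector (the determinant formula alone only gives the eigenvalue), and your care about distinctness of the $\sigma_i$ and about eigenvalues of \nref{eq:NLR} being away from the poles is appropriate.
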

     
Hence, we can  ignore any eigenvalue that is equal to one of the $\sigma_i$'s when it occurs. 
The next results will show that as long as the rational approximations of the functions $f_j: \Omega \rightarrow \CC$ are accurate enough,
the eigenvalues of $\widetilde T(z)$ will be good approximations to all eigenvalues of $T(z)$ located inside the region $\Omega$. 
    

\subsection{Accuracy of computed eigenvalues} 
Let $\Omega_1 $ be a region strictly included in the (larger) disk $\Omega $
such that $\| f_j(z) -r_j(z) \|_{\Omega_1} <\varepsilon $, where the $\Omega_1$-norm is, e.g.,
the infinity norm in $\Omega_1$ and $r_j(z)$ is the rational approximation of function $f_j(z)$. In other words,
each function $f_j(z)$ in \nref{eq:Tofzm1} is approximated by a rational function $r_j(z)$ and this approximation
is assumed to be accurate within an error of $\varepsilon$ in the region $\Omega_1$. Our goal now is to show
that each of the eigenvalues inside $\Omega_1$ is a `good' approximation to an eigenvalue of the original problem
\eqref{eq:nlin0}. This can be done by exploiting the corresponding approximate eigenvectors 
and by considering the residual associated with the approximate eigenpair.
The following simple proposition shows a result along these lines.
\begin{proposition}\label{prop:1} 
  Let us assume that $\| f_j(z) - r_j(z)\|_{\Omega_1} \le \varepsilon$
  for $j=1,\cdots,p$  and let $(\widetilde  \lambda, \widetilde u)$ be  an exact eigenpair
  of
  the  surrogate  problem  \nref{eq:NLR} with  $\widetilde  \lambda$
  located  inside $\Omega_1$  and $  \|  \widetilde u  \| =  1$ for  a
  certain      vector      norm      $\|\cdot     \|$.      \      Let
  $ \mu = \sum\limits_{j=1}^p \|A_j\|$.  Then,
\[ 
\| T(\widetilde \lambda) \widetilde u \| \le \mu \varepsilon.
\]
\end{proposition}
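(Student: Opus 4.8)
The plan is to express $T(\widetilde \lambda)\widetilde u$ as the difference between $T$ and the surrogate operator $\widetilde T$ evaluated at the eigenpair, and then to bound this difference term by term using the scalar approximation errors.

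First I would recall that, by construction, the surrogate operator in \nref{eq:NLR} is obtained from $T$ in \nref{eq:Tofz} by replacing each holomorphic function $f_j$ with its rational approximation $r_j(z) = \sum_{i=1}^m \alpha_{ij}/(z-\sigma_i)$, while the constant and linear parts $-B_0 + zA_0$ are kept exactly. Hence, for every $z$ that is not one of the poles $\sigma_i$,
\[
T(z) - \widetilde T(z) = \sum_{j=1}^p \bigl( f_j(z) - r_j(z) \bigr) A_j .
\]
Since $\widetilde\lambda$ lies in $\Omega_1$, which is strictly contained in $\Omega$ whereas the quadrature nodes $\sigma_i$ lie on its boundary $\Gamma$, the point $\widetilde\lambda$ is not a pole, so both $\widetilde T(\widetilde\lambda)$ and the $r_j(\widetilde\lambda)$ are well defined and this identity applies at $z=\widetilde\lambda$.

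Next, using that $(\widetilde\lambda,\widetilde u)$ is an \emph{exact} eigenpair of the surrogate problem, i.e. $\widetilde T(\widetilde\lambda)\widetilde u = 0$, I would write
\[
T(\widetilde\lambda)\widetilde u = \bigl( T(\widetilde\lambda) - \widetilde T(\widetilde\lambda) \bigr)\widetilde u = \sum_{j=1}^p \bigl( f_j(\widetilde\lambda) - r_j(\widetilde\lambda) \bigr) A_j \widetilde u .
\]
Taking norms, applying the triangle inequality and submultiplicativity, using $\|\widetilde u\| = 1$, and invoking the pointwise bound $|f_j(\widetilde\lambda) - r_j(\widetilde\lambda)| \le \|f_j - r_j\|_{\Omega_1} \le \varepsilon$, which holds because $\widetilde\lambda \in \Omega_1$, yields
\[
\| T(\widetilde\lambda)\widetilde u \| \le \sum_{j=1}^p |f_j(\widetilde\lambda) - r_j(\widetilde\lambda)|\,\|A_j\| \le \varepsilon \sum_{j=1}^p \|A_j\| = \mu\varepsilon ,
\]
which is exactly the stated bound.

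There is no genuine obstacle in this argument; the only points requiring a moment of care are confirming that $\widetilde\lambda$ is not a pole (so the identity for $T - \widetilde T$ is valid there) and being consistent about the scalar errors $f_j(\widetilde\lambda) - r_j(\widetilde\lambda)$ passing through the operator norm as absolute values multiplying $\|A_j\|$. Both are immediate. If one wished to also allow the constant term $f_0$ to be approximated, it would suffice to add a $j=0$ contribution to the sum; the constant $\mu = \sum_{j=1}^p \|A_j\|$ in the statement simply reflects the convention that only $f_1,\dots,f_p$ are replaced by rational approximants.
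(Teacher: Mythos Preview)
Your proof is correct and follows essentially the same approach as the paper: write $T(\widetilde\lambda)\widetilde u = (T-\widetilde T)(\widetilde\lambda)\widetilde u = \sum_{j=1}^p (f_j(\widetilde\lambda)-r_j(\widetilde\lambda))A_j\widetilde u$ using $\widetilde T(\widetilde\lambda)\widetilde u=0$, then bound term by term via the triangle inequality and the assumption $|f_j(\widetilde\lambda)-r_j(\widetilde\lambda)|\le\varepsilon$. You are in fact slightly more careful than the paper in explicitly noting that $\widetilde\lambda\in\Omega_1$ avoids the poles $\sigma_i\in\Gamma$, so the rational approximants are well defined there.
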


\begin{proof}
The approximate problem \nref{eq:NLR} is obtained by replacing $T(z)$ in \nref{eq:Tofz} by:
\eq{eq:Rofzm}
\widetilde T(z) =   -B_0 + z A_0 + r_1(z) A_1 +\ldots + r_p(z) A_p.
\en 
Since $(\widetilde \lambda, \widetilde u)$ is an eigenpair of problem \nref{eq:NLR}, $\widetilde T(\tla) \widetilde u = 0$, which implies:
$$   [-B_0 + \tla A_0 + r_1(\tla) A_1 +\ldots + r_p(\tla) A_p] \tilde u = 0.$$
Setting $f_j(\tla) - r_j(\tla) = \eta_j(\tla)$ and 
substituting this into above equation gives:
\begin{align*}
  [ -B_0 + \tla A_0 & +  \sum_{j=1}^p (f_j(\tla) - \eta_j(\tla))  A_j ] \tlu = 0, \\ 
[ -B_0 + \tla A_0 & +  \sum_{j=1}^p f_j(\tla) A_j ] \tlu  = \left[\sum_{j=1}^p \eta_j(\tla))  A_j \right] \tlu,
\end{align*}
and thus
$
T(\tla) \tlu =  \left[\sum_{j=1}^p \eta_j(\tla)  A_j \right] \tlu.
$ 
Taking norms on both sides and recalling that
$\| f_j(z) - r_j(z)\|_{\Omega_1} \le \varepsilon$  yields
$ 
  \| T(\tla) \tlu \|   = \|\sum_{j=1}^p \eta_j(\tla)  A_j \tlu \|  \ \le \mu
  \varepsilon ,
  $ which is the desired result.
\end{proof}

Proposition    \ref{prop:1}   implies    that    any   eigenpair    of
problem~\nref{eq:NLR}  is an  approximate  eigenpair  of the  original
problem provided that each function $f_j(z)$ is well approximated by a
rational  function  $r_j(z)$ in  $\Omega_1$  and  that the  eigenvalue
$\tla $ is  inside $\Omega_1$. By a backward argument  the opposite is
also true.
\begin{proposition}
  Let us assume that $\| f_j(z) - r_j(z)\|_{\Omega_1} \le \eps$
  for $j=1,\cdots,p$ and let $(\lambda, u)$ be an exact eigenpair for $T(z)$ with
$\lambda$ located inside $\Omega_1$ and  $\| u \|  = 1$.
Then, $ (\lambda, u)$ is an approximate eigenpair of the problem \nref{eq:NLR}, i.e., 
\[
\| \widetilde T(\lambda) u \| \le \mu \varepsilon, 
\]
where $\mu $ is defined as in Proposition~\ref{prop:1}.
\end{proposition}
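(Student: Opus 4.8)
The plan is to mirror the proof of Proposition~\ref{prop:1}, but run the substitution in the opposite direction: instead of starting from the surrogate eigenvalue equation and deriving a residual for $T$, I would start from the exact equation $T(\lambda)u=0$ and derive a residual for $\widetilde T$. First I would write out $\widetilde T(\lambda) u$ using the form \nref{eq:Rofzm}, namely $\widetilde T(\lambda) u = [-B_0 + \lambda A_0 + \sum_{j=1}^p r_j(\lambda) A_j] u$, and subtract the identity $[-B_0 + \lambda A_0 + \sum_{j=1}^p f_j(\lambda) A_j] u = T(\lambda)u = 0$. The difference telescopes to $\widetilde T(\lambda) u = \sum_{j=1}^p (r_j(\lambda) - f_j(\lambda)) A_j u = \sum_{j=1}^p \eta_j(\lambda) A_j u$, using the same notation $\eta_j = r_j - f_j$ introduced in the earlier proof.

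Next I would take norms on both sides: $\| \widetilde T(\lambda) u \| \le \sum_{j=1}^p |\eta_j(\lambda)| \, \|A_j\| \, \|u\|$. Since $\lambda$ lies inside $\Omega_1$ and $\| f_j - r_j\|_{\Omega_1} \le \varepsilon$, each $|\eta_j(\lambda)| \le \varepsilon$; combined with $\|u\|=1$ this gives $\| \widetilde T(\lambda) u \| \le \varepsilon \sum_{j=1}^p \|A_j\| = \mu \varepsilon$, which is exactly the claimed bound. This is essentially a one-line argument once the telescoping substitution is set up, so there is no real technical obstacle here; the only thing requiring a modicum of care is making sure the error bound $\|f_j - r_j\|_{\Omega_1}\le\varepsilon$ is actually applicable, which is guaranteed precisely by the hypothesis that $\lambda$ is located inside $\Omega_1$ (that is the one place where the containment assumption is used, just as in Proposition~\ref{prop:1}).

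Since the argument is symmetric to the preceding proof, I would keep the write-up brief and perhaps simply note that the steps are identical with the roles of $T$ and $\widetilde T$ interchanged. The main point worth emphasizing in the exposition — rather than any subtlety in the computation — is the interpretive one already flagged in the text after Proposition~\ref{prop:1}: together the two propositions show that, up to an $O(\mu\varepsilon)$ residual, the eigenpairs of $T$ inside $\Omega_1$ and those of the surrogate \nref{eq:NLR} inside $\Omega_1$ are in correspondence, so nothing is lost (and nothing spurious is gained) by passing to the rational surrogate, provided the rational approximations are accurate on $\Omega_1$. If anything counts as an "obstacle," it is only the implicit caveat that this residual bound does not by itself guarantee closeness of the \emph{eigenvalues} unless $T$ (or $\widetilde T$) is well-conditioned near $\lambda$; but that is outside the scope of the stated proposition and I would not attempt to address it in the proof itself.
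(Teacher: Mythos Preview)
Your proposal is correct and takes essentially the same approach as the paper, which simply states that the proof is identical to that of Proposition~\ref{prop:1} with the roles of $T$ and $\widetilde T$ interchanged. Your write-up spells out exactly that symmetric substitution, so there is nothing to add.
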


\begin{proof} 
The proof is essentially identical to that  of Proposition~\ref{prop:1}.
\end{proof}

These two results show that for $\varepsilon$ small enough, we should be able to find approximations 
to all eigenvalues of the exact problem located in $\Omega_1$ (and only these) by solving \nref{eq:NLR},
except for cases of highly ill-conditioned eigenvalues. 

\subsection{Conditioning of a simple eigenvalue}

For the reason stated above, it is of particular importance to examine
the   condition   number   of    an   eigenvalue   of   the   extended
problem~(\ref{eq:sc}  --  \ref{eq:sc1}).  There is no loss of generality in   assuming
that $A_0  = I$.   Let us  consider a  simple eigenvalue  $\lambda$ of  the
matrix $\cma$. Its right eigenvector is a vector $w$ of the form shown
in \nref{eq:Block}  with $v_i = u  / (\sigma_i - \lambda)$  defined in
\nref{eq:vi}. As seen earlier --  equation \nref{eq:NLR} -- the vector
$u$   is    an   eigenvector   of   $S(\lambda)$,    i.e.,   we   have
$ S(\lambda) u = 0 .$

The left eigenvector is a vector $s$ that satisfies
$\cma ^H s = \overline \lambda s $. Similarly to $w$, it  consists of block components $h_1, \ldots, h_m, $ and $y$.
The equation
$(\cma ^H - \overline \lambda I) s = 0  $ yields the relations:
\[
  (\overline \sigma_i - \overline \lambda ) h_i + B_i^H y = 0
\quad \Rightarrow  \quad h_i = \frac{1}{\overline \lambda - \overline \sigma_i} B_i^H y,   \quad i=1,\ldots, m
\]
and
\[
  - \sum_{i=1}^m h_i + (B_0^H -  \overline \lambda I) y = 0  \quad \Rightarrow \quad
  S(\lambda)^H y = 0 .
  \] 
  Thus,  the right and the left eigenvectors of $\cma$ are defined in terms of the right and the left eigenvectors
  $u$ and $y$ of $S(\lambda)$.

  As is well-known, the condition number of a simple eigenvalue $\lambda$ is the inverse of the cosine of
  the acute angle between the left and the right eigenvectors.
  Before  considering the inner product $(w, s)$, we point out  that the derivative of $S(z)$ is:
  \eq{eq:Sprime}
  S'(z) = - I + \sum_{i=1}^m \frac{B_i}{(z-\sigma_i)^2} .
  \en
  Consider  now the inner product $s^H w$:
  \[
    s^H w = y^H u + \sum_{i=1}^m h_i^H v_i = y^H u - \sum_{i=1}^m  \frac{y^H B_i u}{(\lambda-\sigma_i)^2}
    = - y^H S'(\lambda) u .
    \]
    Finally, we need to calculate the norms of $s$ and $w$. For $w$ we have:
    \[
      \| w \|_2^2 = \| u \| ^2 + \sum_{i=1}^m
      \frac{\|u \|_2^2 }{|\lambda-\sigma_i|^2} = \| u \|^2 \left[ 1 +
              \frac{1}{|\lambda-\sigma_i|^2} \right] , \]
while for $s$:
    \[
      \| s \|_2^2 = \| y \| ^2 + \sum_{i=1}^m
      \frac{\|  B_i y \|_2^2 }{|\lambda-\sigma_i|^2} = \| y\|^2 \left[ 1 +
        \sum_{i=1}^m  \frac{\| B_i y\|_2^2 }{\|y \|_2^2 |\lambda-\sigma_i|^2} \right] . \]
    Assuming that the vectors $u$ and $y$ are of norm one
    leads to  the following proposition which establishes an expression for the desired condition number. 

          \begin{proposition}\label{prop:cond}
            Let $\lambda$ be a simple eigenvalue of \nref{eq:sc1}, and $u, y$ the corresponding unit norm right and left
            eigenvectors (respectively) of  $S(\lambda)$. Then the condition number of $\lambda$ as
            an eigenvalue of (\ref{eq:sc} -- \ref{eq:sc1}) is given by
            \eq{eq:condA}
            \kappa(\lambda)  = 
            \frac{ \alpha_u \alpha_y } { | (S'(\lambda) u, y)| },
            \en
            where 
\eq{eq:condA1}
            \alpha_u = \sqrt{1 + \sum_{i=1}^m \frac{1}{|\lambda - \sigma_i|^2}} \qquad \text{ and } \quad \quad
            \alpha_y = \sqrt{1 + \sum_{i=1}^m  
              \frac{\| B_i y\|_2 }{|\lambda - \sigma_i|^2}} .      \en
          \end{proposition}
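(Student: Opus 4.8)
The plan is to assemble the pieces already worked out in the paragraphs preceding the statement. The backbone is the classical Wilkinson characterization: for a \emph{simple} eigenvalue $\lambda$ of the matrix $\cma$ --- recall that under the standing assumption $A_0 = I$ we have $\cmm = I$, so the pencil $(\cma,\cmm)$ reduces to a standard eigenproblem --- the condition number is $\kappa(\lambda) = \|w\|_2\,\|s\|_2 / |s^H w|$, i.e.\ the reciprocal of the cosine of the acute angle between the right eigenvector $w$ and the left eigenvector $s$. Simplicity of $\lambda$ is exactly what guarantees $s^H w \neq 0$, so this expression is well defined; this is essentially the only place the simplicity hypothesis is used.

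First I would insert the block forms of $w$ and $s$ derived above: $w$ has blocks $v_i = u/(\sigma_i - \lambda)$ for $i=1,\ldots,m$ and a final block $u$ with $S(\lambda)u = 0$, while $s$ has blocks $h_i = (\bar\lambda - \bar\sigma_i)^{-1} B_i^H y$ and a final block $y$ with $S(\lambda)^H y = 0$. Computing $s^H w$ block by block and recognizing the resulting sum through the formula \nref{eq:Sprime} for $S'(z)$ gives $s^H w = y^H u - \sum_{i=1}^m (\lambda - \sigma_i)^{-2}\, y^H B_i u = - y^H S'(\lambda) u$, so the denominator of $\kappa(\lambda)$ is precisely $|(S'(\lambda)u, y)|$.

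Next I would evaluate the two Euclidean norms blockwise: $\|w\|_2^2 = \|u\|_2^2\,\bigl(1 + \sum_{i=1}^m |\lambda - \sigma_i|^{-2}\bigr)$ and $\|s\|_2^2 = \|y\|_2^2 + \sum_{i=1}^m |\lambda - \sigma_i|^{-2}\,\|B_i^H y\|_2^2$. Imposing the normalization $\|u\|_2 = \|y\|_2 = 1$ from the hypothesis then identifies $\|w\|_2$ with $\alpha_u$ and $\|s\|_2$ with $\alpha_y$ as in \nref{eq:condA1}, and dividing by $|s^H w|$ yields \nref{eq:condA}.

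No step is genuinely hard; the work is bookkeeping. The two places that warrant a little care are: keeping the Hermitian transposes and complex conjugates consistent so that the cross terms in $s^H w$ telescope into $y^H S'(\lambda) u$ and not into its conjugate (the relations for $h_i$ carry bars on $\lambda$ and $\sigma_i$, and these must cancel against those coming from $v_i$); and remembering that for a nontrivial $\cmm$ the denominator would in general read $|s^H \cmm w|$ --- it is only the assumption $A_0 = I$, hence $\cmm = I$, that lets us drop $\cmm$ here, and this reduction is worth stating explicitly so the passage from the pencil to the standard formula is transparent.
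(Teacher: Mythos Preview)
Your proposal is correct and follows essentially the same route as the paper: the paper's argument is precisely the blockwise computation of $s^H w$, $\|w\|_2$, and $\|s\|_2$ that you outline, combined with the Wilkinson formula $\kappa(\lambda)=\|w\|_2\|s\|_2/|s^H w|$ and the identification of the sum via \nref{eq:Sprime}. One small point worth flagging: your block computation correctly produces $\|B_i^H y\|_2^2$ in the expression for $\|s\|_2^2$, whereas the statement records $\|B_i y\|_2$ (unsquared, and without the Hermitian transpose); your version is the one that actually drops out of the calculation.
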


          The coefficients $\alpha_u,  \alpha_y$ will remain bounded as long as $\lambda$ is far away from
          any of the poles.
          Note in particular that the terms $\| B_i y \|_2 $ can be bounded by the constant
          $\beta = \max_i \|B_i\|_2 $.
          On the other hand nothing will prevent  the denominator  in \nref{eq:condA}  from being close
          to zero. The condition number can be easily gauged to determine if this is the case. Note that
          $y^H S'(\lambda) u $ is inexpensive to compute once the eigenvectors $u$ and $y$ are available.

\subsection{The halo of extraneous eigenvalues}
In all our experiments we observed that the eigenvalues  of the problem
(\ref{eq:sc} -- \ref{eq:sc1}) that are not eigenvalues of the
          original nonlinear problem \eqref{eq:Tofz} tend to congregate into a `halo' around the
          contour $\Gamma$ used for the Cauchy integration. 
          
            It is possible to explain this
          phenomenon.
          First note that the basis of the method under consideration is to approximate the original nonlinear matrix
          function $T(z)$ in \nref{eq:Tofz}  by the rational function:
\eq{eq:Rofz}
\widetilde T(z) =  - B_0 +  z A_0 + r_1(z) A_1 + \ldots + r_p(z) A_p , 
\en
where each $r_j(z)$ is a rational approximation of $f_j(z)$.

 Consider the situation when
$z$ is outside the domain used to obtain the Cauchy integral,  far from the contour.
Assuming that the number of quadrature points $m$ is large enough, then each $r_j(z)$ will be close to zero.
Hence, any eigenvalue of
$\widetilde T(z) $ that is outside the contour and not too close to it should be just
an eigenvalue of the generalized problem $(B_0 - \lambda A_0) u = 0$.
In other words, it should be close to an eigenvalue of the linear part of $T(z)$.

Let us now consider the opposite case of an eigenvalue of $\widetilde T(z)$ that is inside
the contour but also not too close to it.  Our earlier results show that in this case
we should only find eigenvalues of the original problem \eqref{eq:Tofz} and no other eigenvalues.

For an eigenvalue to be \emph{extraneous}, i.e., in the spectrum of
(\ref{eq:sc} -- \ref{eq:sc1}) but not of \nref{eq:Tofz}, it must therefore \emph{either
be (close to) an eigenvalue of the linear part of $T(z)$, or
located (close to) the contour.} 

Although  this argument  is based  on a  simple model,  it provides  a
picture that is remarkably close to what is observed in practice. Next we  present  an  illustration   using  a  small  quadratic  eigenvalue
problem. Quadratic eigenvalue problems can be handled more efficiently
by  standard  linearization  than  by the  method  presented  in  this
paper.  However, they  can be  useful  for the  purpose of  validation
because their eigenvalues are  readily available. Consider the problem
\eq{eq:QuadEx} (-B_0  + \lambda A_0 +  \lambda^2 A_2) u =  0, \en where
the matrices $B_0, A_0, A_2$ are generated by the following three {\sc
  Matlab} lines of code with $n=4$:
\begin{verbatim}
  B0 = -2*eye(n)+diag(ones(n-1,1),1)+diag(ones(n-1,1),-1) ;
  A0 = eye(n); 
  A2 = 0.5*(n*eye(n)-eye(n,1)*ones(1,n)-ones(n,1)*eye(1,n));
\end{verbatim} 
The eigenvalues  are all located  inside a rectangle  with bottom-left
and top-right  corners $(-1, -1.5i),  (0, 1.5i)$  which we use  as the
integration contour.

\begin{figure}[h]
  \includegraphics[width=0.45\textwidth]{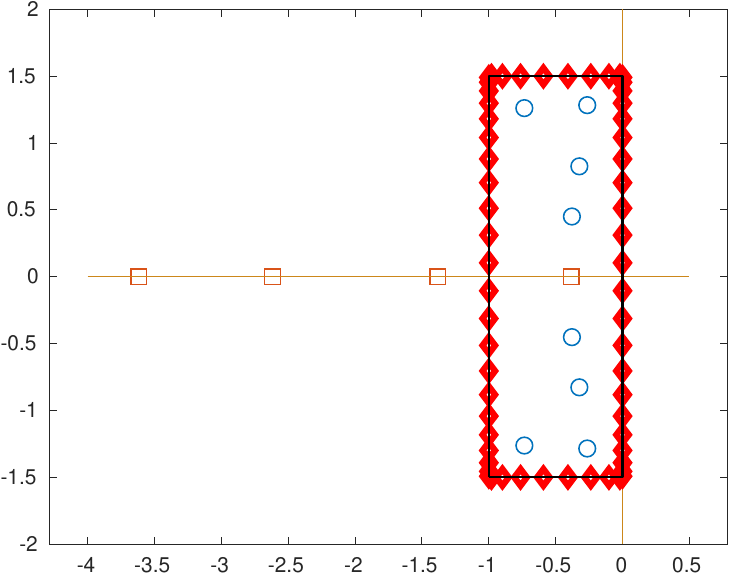}
  \includegraphics[width=0.45\textwidth]{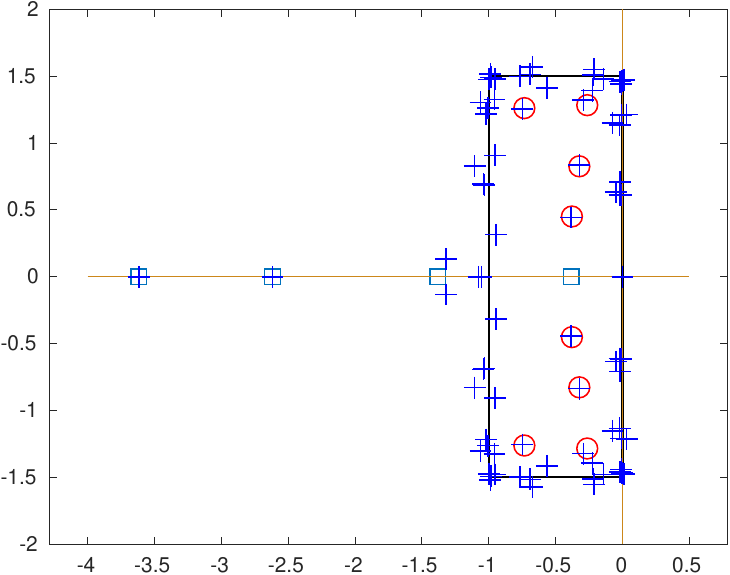} 
  \caption{
    Left: The $8$ eigenvalues of the original problem \eqref{eq:QuadEx} (circle); 
    the $4$ eigenvalues of the linear
    part (square); contour and quadrature points along it.
    Right: Eigenvalues computed with $m=20$ quadrature points (plus)
    along with contour, original eigenvalues (circle), and eigenvalues of
    linear part (square). \label{fig:quad20}
   }
  \end{figure}

  \begin{figure}[h]
  \includegraphics[width=0.45\textwidth]{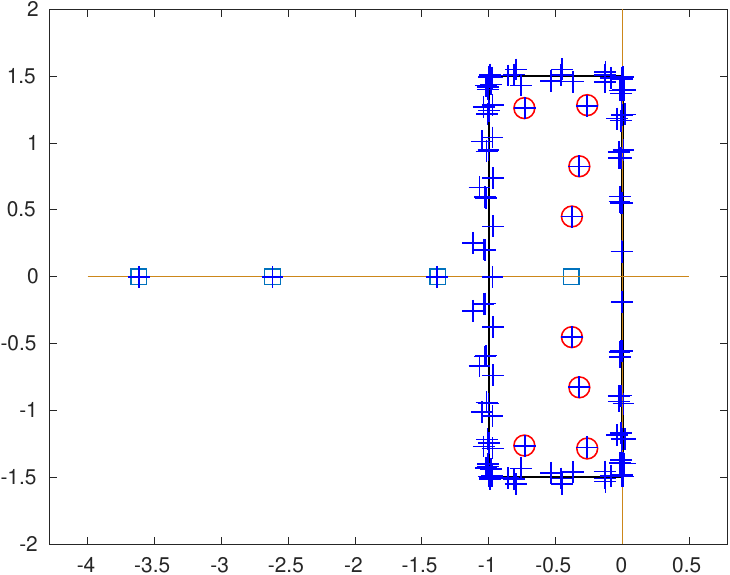}
  \includegraphics[width=0.45\textwidth]{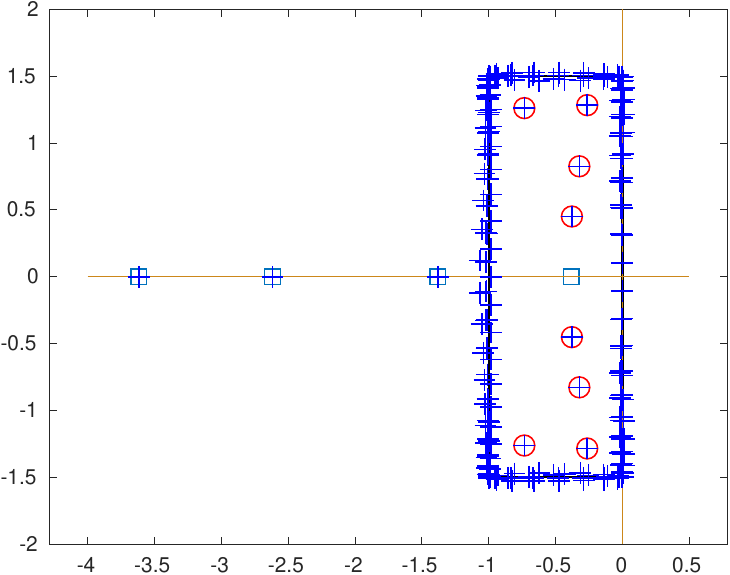}
  \caption{Same information as in the right part of  Figure~\ref{fig:quad20}
    using a total of $m=32$ quadrature points (left) and
     $m=60$ quadrature points (right). \label{fig:quad60}
  }
\end{figure}

Gauss-Legendre quadrature  formulas are  invoked on  each side  of the
rectangle with a  number of points selected to be  proportional to the
side length.  The  left side of Figure~\ref{fig:quad20}  shows the $8$
eigenvalues of the original problem \eqref{eq:Tofz} as well as the $4$
eigenvalues of the  pencil $(B_0, A_0)$. Three of  these eigenvalues are
located  outside  the  contour  and  one inside.  The  right  part  of
Figure~\ref{fig:quad20} and the two  plots in Figure~\ref{fig:quad60} show
the  eigenvalues of  problem  (\ref{eq:sc} --  \ref{eq:sc1}) when  the
number of contour points is $m=20, 32, 60$, respectively.  When $m=20$
the  approximations are  still rough.  However, the  pattern mentioned
above begins  to unravel: the eigenvalues  of \nref{eq:QuadEx} located
inside  the  contour   are  more  or  less   approximated,  and  those
eigenvalues of $(B_0, A_0)$ that are outside are starting to be neared
by pluses.  Observe that the eigenvalue  of $(B_0, A_0)$ that  is near
$-1.5$ is approximated by two  eigenvalues of $\cma$. In contrast, the
one near $-0.5$ (inside the  contour) is not approximated as predicted
by the  theory.  As $m$ increases  this picture is confirmed:  (1) all
the  eigenvalues  of  \nref{eq:QuadEx}  inside the  contour  are  well
approximated,  (2) all  the  eigenvalues of  $(B_0,  A_0)$ outside  the
contour are  well approximated by  eigenvalues of $\cma$, and  (3) the
eigenvalue  of   $(B_0,  A_0)$  inside  the   contour  is  essentially
`ignored'.  In  addition, the halo  of eigenvalues around  the contour
becomes quite close to the contour itself.  This small example provides a
good  illustration of  the general  behavior  that we  observe in  our
experiments.

%
%


\section{Numerical Experiments} \label{sec:exper} 
All  the   numerical  experiments  presented  in   this  section  were
performed with {\sc Matlab} R2018a. We will illustrate the behavior of
Algorithm  \ref{alg:subsit}  on
several  nonlinear  eigenvalue   problems  discussed  in  \cite{Bey12,
Ruh73, Kre09}. All the examples considered come in the form
given in \nref{eq:Tofz}. For most  of the examples, the contour $\Gamma$
is either   circular or  rectangular and we seek  the eigenvalues
closest   to  the   center of $\Gamma$.
For   Algorithm \ref{alg:subsit}, the shift $\sigma$ is selected to be the center of the
region enclosed by the contour $\Gamma$.

In the case of a circular contour,  the $m$ quadrature nodes and weights
used  to  perform  the  numerical integration  to approximate  the
functions $f_j$   inside the  contour $\Gamma$ were  generated using 
the Gauss-Legendre quadrature rule. To choose  a suitable $m$, we take
two circles  $\Omega_1$  and $\Omega$  with the  same center
and $\Omega_1 \subset \Omega$. Then,  $m$ is increased until the accuracy
of  the resulting rational  approximation is  high enough  inside
$\Omega_1$. Note that we need to  avoid a region near the outer circle
not only because of the poles,  but also because the approximations of
the $f_j$'s  will tend to be  poor in this region.   To illustrate the
effectiveness of  the proposed approaches, we  compare the obtained eigenvalues with  the  ones   obtained  by  Beyn's
method~\cite{Bey12} or/and via a corresponding linearization. We also compare our algorithm with some well-established nonlinear eigensolvers utilizing rational approximation, i.e., the set-valued AAA algorithm~\cite{NakST18} and the NLEIGS~\cite{GueVBMM14}.

%
%

\subsection*{Example 1}
Consider the following example discussed in \cite[Sec. 2.4.2]{MicN07} and \cite[Example 13]{Kre09},
\noindent
\eq{eq:Kresex}
	T(z) = -B_0+zI+e^{-z\tau}A_1,
\en
with $B_0 = \begin{pmatrix} 
	-5 & 1\\
2 & -6
\end{pmatrix}$, $A_1=-\begin{pmatrix} 
-2 & 1 \\
4 & -1 
\end{pmatrix}$ and $\tau = 1$.
The   nonlinear    eigenvalue   problem   \nref{eq:Kresex}    is   the
characteristic       equation       of      a       delay       system
$x'(t) = - B_0x(t)+A_1x(t-\tau)$.  For the purpose of  a comparison with
the   results  from~\cite[Example   5.5]{Bey12},   we  calculate   all
eigenvalues enclosed  by a  circle centered  at $c  = -1$  with radius
$r = 6$.   Referring to Proposition~\ref{prop:1}, we  first check which
values of $m$  will provide a good rational  approximation $r_m(z)$ of
$f(z)=e^{-z}$.   The right  part of  Figure~\ref{fig:Kres1} shows  the
errors $e_m=\|f(z)-r_m(z)\|_{\infty}$
(evaluated on a finely discretized  version of  $\Omega_1$)
versus  the number of quadrature
nodes $m$.  Notice that the  accuracy of the rational approximation of
$f(z)$ inside the considered contour is  good enough for $m = 50$.  We
can  therefore solve  the eigenvalue  problem \nref{eq:sc},  associated
with the approximate problem  \nref{eq:NLR}, with $m=50$ Gauss-Legendre
quadrature nodes. The left  part of  Figure~\ref{fig:Kres1} compares
the  eigenvalues  computed by applying the shift-and-invert Arnoldi method  to the large system
(\ref{eq:sc} -- \ref{eq:sc1}) with $m = 50$ quadrature nodes  and those
computed  by Beyn's  method using  the  same contour.
Note that since
the number of eigenvalues in the considered contour is larger than the
size of problem~\eqref{eq:Kresex}, we use Beyn's second algorithm
with three moments to compute the five eigenvalues with the backward error 
smaller than $\delta = 10^{-10}$. With these parameters, the number of quadrature nodes
for Beyn's method necessary to get the five eigenvalues inside the circle is $80$. 
\begin{figure}
	\centering
  \includegraphics[width=0.45\textwidth]{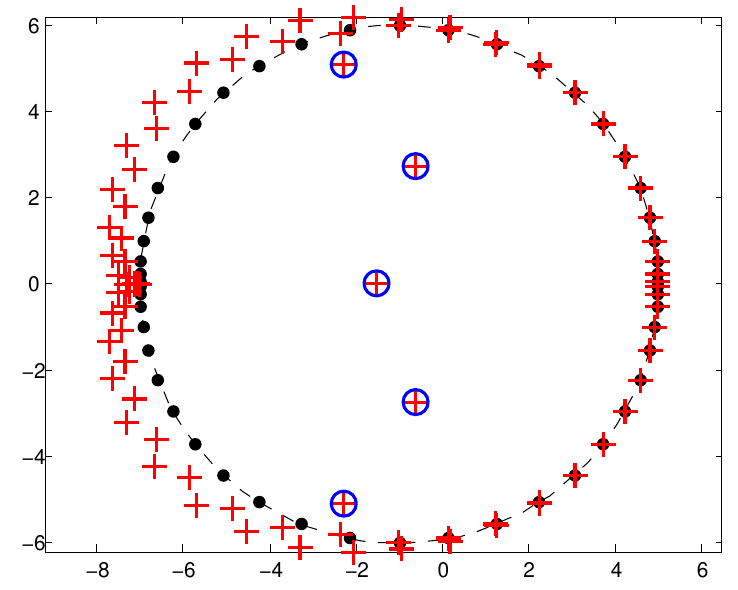}
  \includegraphics[width=0.45\textwidth]{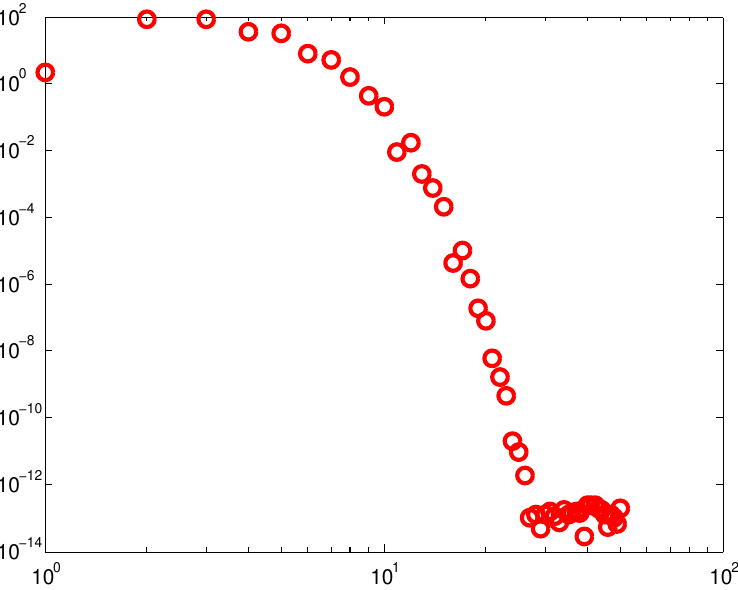}
  
  \caption{Left: Eigenvalues of \nref{eq:Kresex} inside a
    circle of radius $r = 6$ centered at $c = -1$ obtained by solving the eigenvalue problem \nref{eq:sc} (plus)
    using $m=50$ quadrature nodes (point) and by Beyn's method (circle) using $150$ quadrature nodes.
    Right: The errors $e_m$ of the rational approximation of $e^{-z}$ versus the number of quadrature nodes $m$.}
  \label{fig:Kres1}
\end{figure}

In  order to  check the  conclusions of  Proposition \ref{prop:1},  we
compute again all the  eigenvalues  of the  generalized eigenvalue  problem
\nref{eq:sc}  associated with  the  approximate problem  \nref{eq:NLR}
inside a  circle $\Omega$ centered  at $c = -1$  with radius $r  = 6$,
using $m=100$  quadrature nodes.   These eigenvalues  are shown  on the
left side  of Figure \ref{fig:Kresall}. Let $\Omega_1$ be a disk with
the same center as $\Omega$ and with radius $r_1=r/2$. Let $\lambda_1$
be the closest eigenvalue to $c$  located in $\Omega_1$ and  $u_1$ be
the corresponding eigenvector.  Recall  that $u_1$ is taken from the last
$n$  entries  of  the  eigenvector  corresponding  to  the  eigenvalue
$\lambda_1$ of \nref{eq:sc}.   Let $\mu$ be the  constant from Proposition
\ref{prop:1} and  $r_m(z)$ the rational approximation  of the function
$f(z)=e^{-z}$.   The  right side  of  Figure  \ref{fig:Kresall}
 compares      the     residuals
$        \|T(\lambda_1)        u_1\|_{\infty}$  with the         errors
$e_m= \mu \|f(z)-r_m(z)\|_{\infty}$
when  the  number of quadrature nodes $m$ varies.

\begin{figure}
	\centering
  \includegraphics[width=0.45\textwidth]{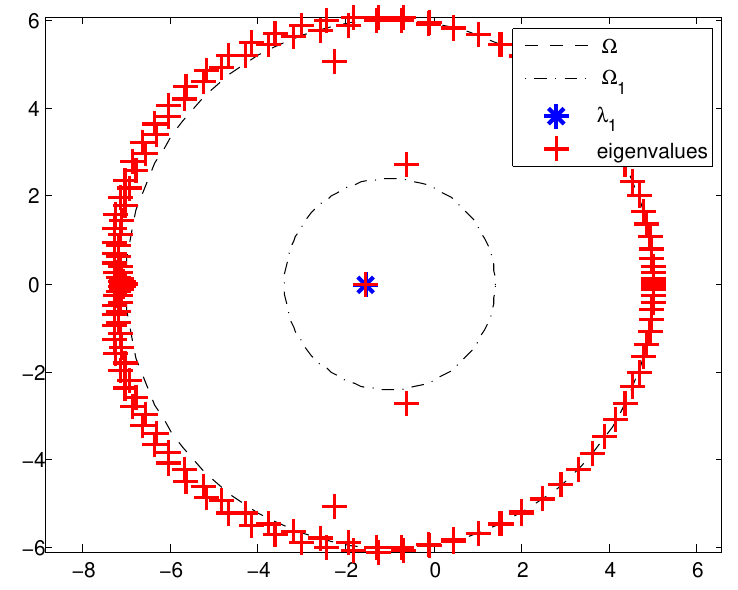}
  \includegraphics[width=0.45\textwidth]{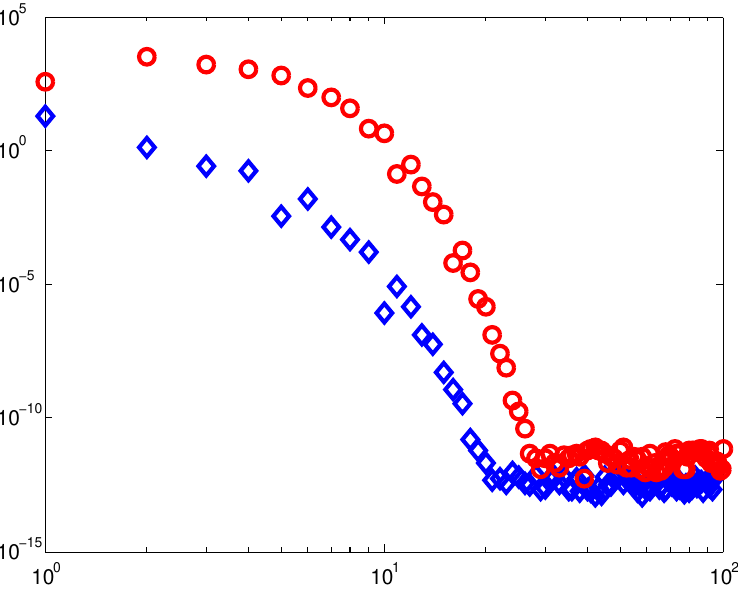}
  \caption{Left: All eigenvalues of \nref{eq:Kresex} (plus) computed via \nref{eq:sc}.   
    Right: Residuals $ \|T(\lambda_1) u_1\|_{\infty}$ (diamond) and errors
    $e_m$ (circle) versus the number of quadrature nodes $m$.}
\label{fig:Kresall}
\end{figure}
%
%

\subsection*{Example 2}
In this experiment, we consider the same nonlinear eigenvalue problem
as in Example 1 with a different search contour.  The location of the
eigenvalues in Example 1 suggests that it should be more effective to
consider a rectangular contour instead of a disk.  An advantage of
rectangular regions is that they are easier to subdivide into smaller
rectangular regions than disks.  For example, we can split a rectangle
in the complex plane into different sub-rectangles and then apply
Algorithm \ref{alg:subsit} in each sub-rectangle. A side benefit of
this approach is the added parallelism since each sub-rectangle can be
processed independently.  Finally, this divide-and-conquer approach
also allows to take advantage of the trade-off between using smaller
regions which require fewer poles versus larger regions which will
yield more eigenvalues at once at the cost of using more
poles.
If $c_1, c_2, c_3, c_4$ are the four corners of the rectangle,
listed counter clock-wise with $c_1$ being the top-left corner, 
the integration starts  at $c_1$, and is performed
counterclockwise using Gauss-Legendre quadrature on each side.
To solve the nonlinear eigenvalue problem \nref{eq:Kresex},
we consider the rectangle defined by the two opposite corners at 
$c_2=-3-6\imath$ and $c_4=1+6\imath$, and we solve the eigenvalue problem \nref{eq:sc}
directly, using $m=40$ quadrature points.
The left part of Figure \ref{fig:KresRectangle}
shows that all eigenvalues are well approximated  
inside the rectangle. To illustrate the behavior of
the rational approximation method for the nonlinear eigenvalue problem
near the nodes, we consider a smaller rectangle defined by
$c_2=-2.5-6\imath$ and $c_4=-0.2+6\imath$.   As we can clearly see on the right side of
Figure \ref{fig:KresRectangle}, 
eigenvalues near the quadrature nodes are not well approximated.
This is a consequence of the poor rational approximation of the
function $f(z)=e^{-z}$ near the quadrature nodes. 
Good eigenvalue approximations can be computed 
by increasing the number of quadrature nodes on each side.
This is illustrated in Figure \ref{fig:KresRectangleIncreasem} in which $m=50$ quadrature nodes are considered.


Even though for some problems rectangular (non-circular)
contours may seem to be better suited, they do not yield 
an exponential increase in accuracy as the number of quadrature points increases as is the case for
the trapezoidal rule on circular contours~\cite[Theorem 2.1]{TreW14}.
They are also  more tedious to implement.

\begin{figure}
	\centering
		\includegraphics[width=0.45\textwidth]{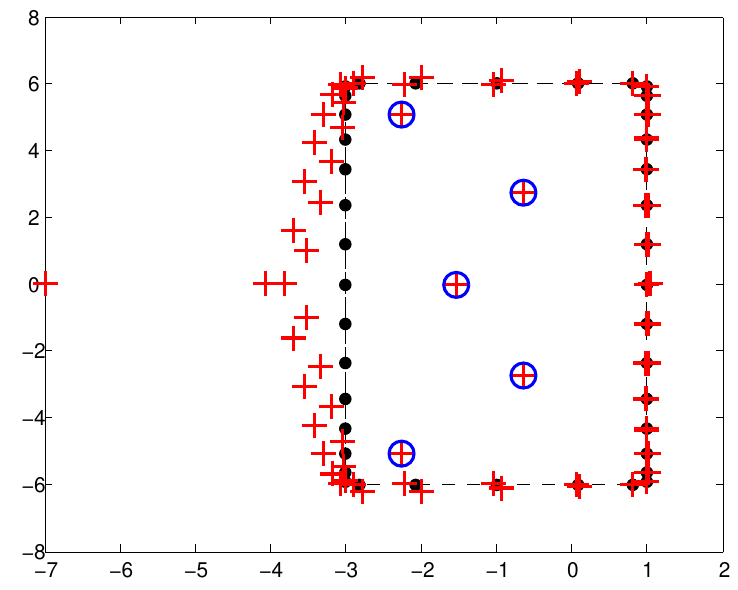}
		\includegraphics[width=0.45\textwidth]{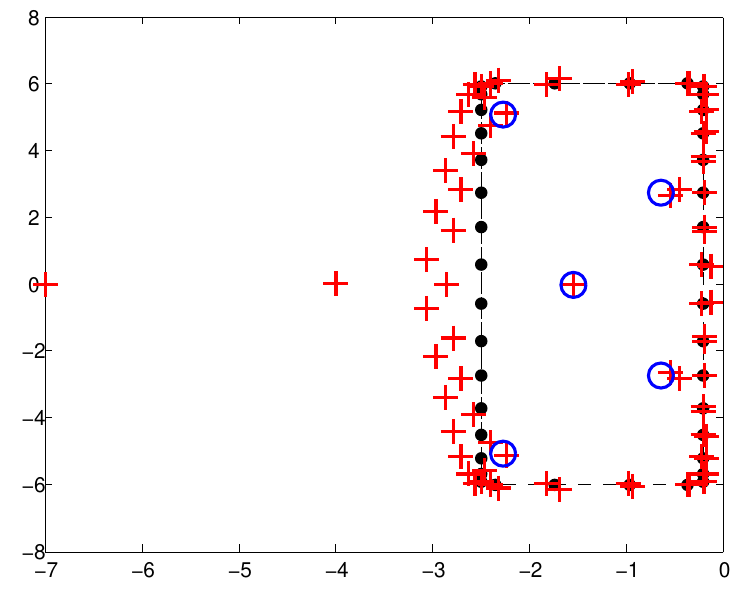}
	\caption{Left: Eigenvalues of \nref{eq:Kresex} obtained by solving the eigenvalue problem \nref{eq:sc} (plus) inside a
          rectangle defined by $c_2=-3-6\imath$ and $c_4=1+6\imath$
          with $m=40$ quadrature nodes (point) obtained 
          by Beyn's method (circle) 
          Right: Eigenvalues of \nref{eq:Kresex} obtained by solving the eigenvalue problem \nref{eq:sc} (plus) inside a
          rectangle defined by $c_2=-2.5-6\imath$ and $c_4=-0.2+6\imath$ with $m=40$ quadrature nodes (point) obtained by Beyn's method (circle). }
	\label{fig:KresRectangle}
      \end{figure}
      
\begin{figure}
\centering
	\includegraphics[width=0.45\textwidth]{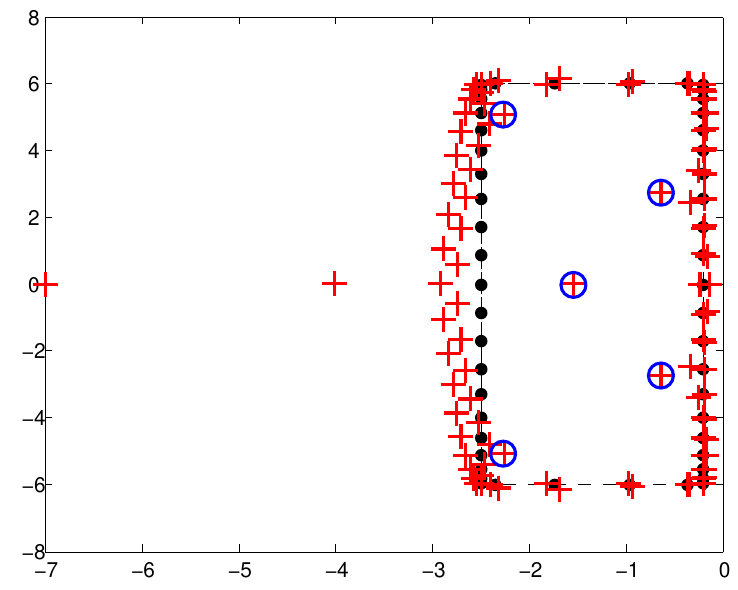}
\caption{Eigenvalues of \nref{eq:Kresex} obtained by solving the eigenvalue problem \nref{eq:sc} (plus) inside a
	rectangle defined by $c_2=-2.5-6\imath$ and $c_4=-0.2+6\imath$ with $m=50$ quadrature nodes (point) and by Beyn's method (circle).}
\label{fig:KresRectangleIncreasem}
\end{figure}

%
%
      
 \subsection*{Example 3: Hadeler problem}
 As an example of a general nonlinear eigenvalue problem,
 we  consider the \emph{Hadeler problem}~\cite{hadeler1967mehrparametrige,Ruh73,nlevp_collection}:
\begin{equation}\label{eqn:hadelerproblem}
T(z)=(e^z-1)B_1+z^2 B_2-B_0,
\end{equation}
with the coefficient matrices
\begin{equation}
B_0=b_0I,\ \ \ B_1=(b^{(1)}_{jk}),\ \ B_2=(b^{(2)}_{jk}),
\end{equation}
\begin{equation}
b^{(1)}_{jk}=(n+1-\max(j,k))jk,\ \ \ b^{(2)}_{jk}=n\delta_{jk}+1/(j+k),
\end{equation}
of dimension $n$ and a parameter $b_0=100$ 
(following reference~\cite{Ruh73}). For our experiments we choose $n=200$.
Note that the theoretical considerations in Section~\ref{sec:theory} assumed the matrix $A_0$ to be invertible,
this example  shows that our algorithm can be used for solving more general problems.

We refer the reader to the above articles for details on this problem.
The eigenvalues of \nref{eqn:hadelerproblem} are real with $n$ of them
being negative and $n$ positive. The eigenvalues become better spaced
as we move away from the origin and the smallest one is close to
$-48$.  We compute the eigenvalues inside a circle centered at $c =
-30$ with radius $r=11.5$.  We first determine the number of
quadrature nodes $m$ necessary to get a good rational approximations of
functions $f_1(z)=e^z-1$ and $f_2(z)=z^2$ inside the considered
circular contour $\Gamma$.  The right part of Figure \ref{fig:Hadler}
shows the approximation errors for the rational approximations of
$f_1(z)$ and $f_2(z)$ versus $m$. Based on Figure \ref{fig:Hadler} and
referring to proposition \ref{prop:1}, a degree of $m=32$ is
sufficient to approximate $T(z)$ up to the accuracy of $\mathrm{tol} =
10^{-12}$.  Using $m = 32$ Gauss-Legendre quadrature nodes, $12$
eigenvalues of \eqref{eqn:hadelerproblem} were computed using
shift-and-invert Arnoldi method applied to the large system
(\ref{eq:sc}--\ref{eq:sc1}).  These results are compared with the
approximations obtained by Beyn's first algorithm~\cite{Bey12}, see
left side of Figure \ref{fig:Hadler}.  For Beyn's method, $50$
quadrature points are required to reach a backward error of the $12$
eigenvalues smaller than $\delta = 10^{-10}$.

We repeat the same  experiment using  the reduced  subspace iteration
given by Algorithm \ref{alg:subsit}.
We first start with $\nu=40$ random vectors, where $\nu$ is the
dimension of the subspace in Algorithm \ref{alg:subsit} and then apply
$q=10$ steps of inverse iteration method, i.e.,
Algorithm~\ref{alg:shinv}, on each of these vectors separately to
obtain a block of $\nu$ vectors each one of size $n$.  Note that these
vectors are the resulting bottom parts of the final iterates of
Algorithm \ref{alg:shinv}. These $\nu$ bottom parts are orthogonalized
to obtain an orthonormal basis $U$ used to perform the Rayleigh-Ritz
projection that leads to a nonlinear eigenvalue problem in $\CC^{\nu}$
of the form \nref{eq:RR2}.  We then solve this reduced (nonlinear)
eigenvalue problem \nref{eq:RR2} by computing the eigenvalues and
eigenvectors of the corresponding expanded problem (\ref{eq:sc} --
\ref{eq:sc1}).  Note that this projected problem is now of size $(m+1) \nu \ll (m+1) n$.
Before each restart of Algorithm \ref{alg:subsit},
we select $\nu$ approximate eigenpairs whose eigenvalues are inside
the contour.  The initial vectors $w$ selected in line~3 of the
Algorithm \ref{alg:subsit}
are of the form $w =[v; \ u]$ where the components $v_i$ of $v$
satisfy $v_i = u /(\sigma_i -\lambda)$.  Here $(\lambda, u)$ is one of
the $\nu$ approximate eigenpairs computed from the previous outer
iteration. At the very first outer iteration $v$ and $u$ are random
vectors. The resulting bottom parts of the final iterates are used to
form the block $U$, see line~5 of Algorithm \ref{alg:subsit}.
The columns of $U$ are orthonormalized before applying the Rayleigh-Ritz
procedure in lines~6--7. At each level $\ell$ we use $2^{\ell-1}$
quadrature points. This multilevel approach allows us to obtain
several (coarse) approximations, one for each level $\ell$, using the
same set of $m=32$ (fine) quadrature points.
 Algorithm \ref{alg:subsit} computed the $12$ eigenvalues of
 interest requiring $L = 6$ outer iterations.  These results are
 compared with the approximations obtained by the AAA
 algorithm~\cite{NakST18} and the NLEIGS algorithm~\cite{GueVBMM14},
 see Figure \ref{fig:Hadler_Algo4_AAA}. For the AAA and NLEIGS
  algorithms, the boundary circle is discretized by $100$ equispaced
  points and an error tolerance of $10^{-12}$ is set for the rational
  interpolant. Note that $7$ and $38$ interpolation nodes are needed
  to approximate $T(z)$ up to the accuracy of $\mathrm{tol} =
  10^{-12}$ inside the circle, using the AAA and the NLEIGS algorithm,
  respectively. Finally the right side of
  Figure~\ref{fig:Hadler_Algo4_AAA} illustrates the nonlinear residual
  norm $\|T(\lambda) u\|_{\infty}$ for all $12$ eigenpairs $(\lambda,
  u)$ computed by Algorithm~\ref{alg:subsit}, and those computed by
  the AAA and the NLEIGS algorithm.

Another way to  extract those $12$ eigenvalues
of  interest is  to resort  to  the rational  approximation using  the
Cauchy integral  formula inside an elliptic contour.  We  consider an
ellipse centered  at $c=-30$ with  semi-major axis $r_x=10$ and semi-minor axis $r_y=1$  on the
$x$-axis  and  $y$-axis,  respectively.
We can solve the expanded problem (\ref{eq:sc} -- \ref{eq:sc1}) with
$m=8$ and $\sigma=c$ and perform as many steps as needed to extract
all $12$ real eigenvalues inside the elliptic contour.  In Figure
\ref{fig:HadelerEllipse}, we present the eigenvalues obtained solving
the expanded problem (\ref{eq:sc} -- \ref{eq:sc1}) using Algorithm~
\ref{alg:subsit} and Beyn's integral method with the same elliptic
contour.  For Beyn's method $20$ trapezoidal quadrature nodes are
necessary to compute all $12$ eigenvalues of interest with the
backward error smaller than $\delta = 10^{-10}$.

\begin{figure}
	\centering
  \includegraphics[width=0.45\textwidth]{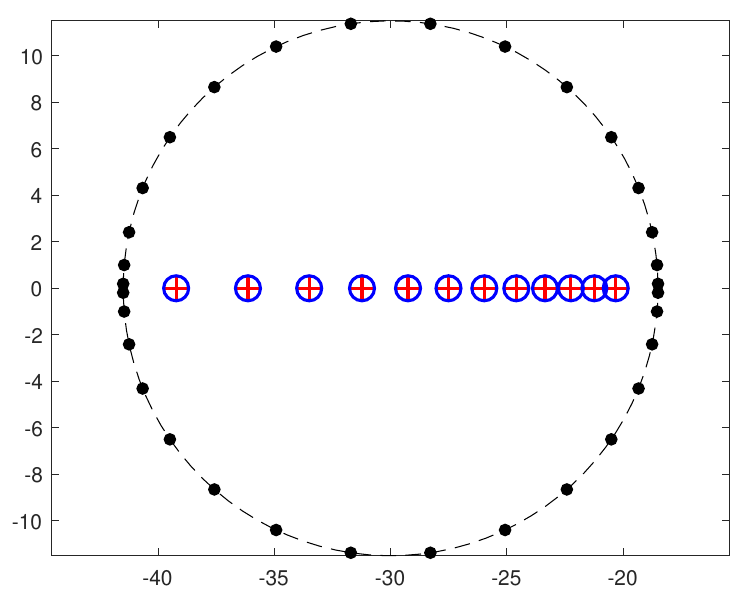}
  \includegraphics[width=0.45\textwidth]{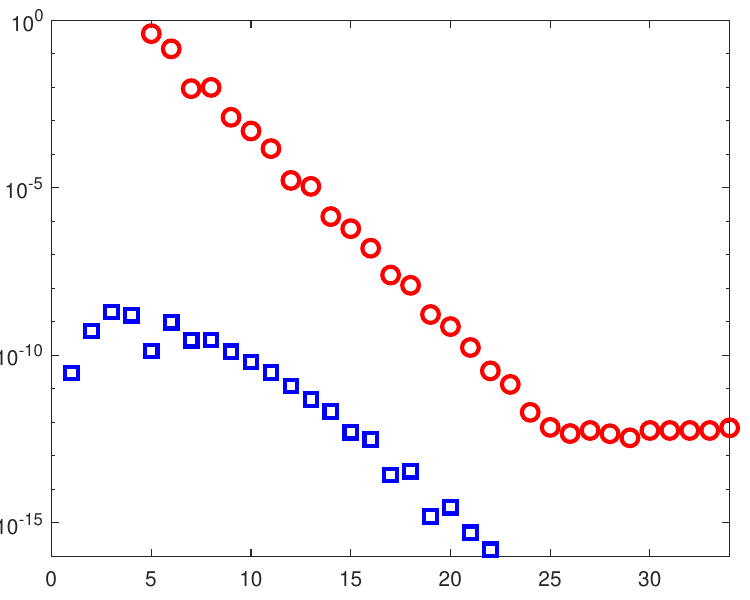}
  \caption{Left:  Eigenvalues of  (\ref{eqn:hadelerproblem}) inside  a
    circle of radius  $r=11.5$ and center $c=-30$  obtained by computing the  eigenvalues of the expanded problem (\ref{eq:sc} --  \ref{eq:sc1})  (plus) and by  Beyn's method  (circle).  Right:  The errors $e_m$ of the rational approximation of $e^{-z}$ (square) and $z^2$ (circle)  versus  the  number   of  quadrature  nodes
    $m$.}\label{fig:Hadler}
\end{figure}
\begin{figure}
	\centering
	\includegraphics[width=0.48\textwidth]{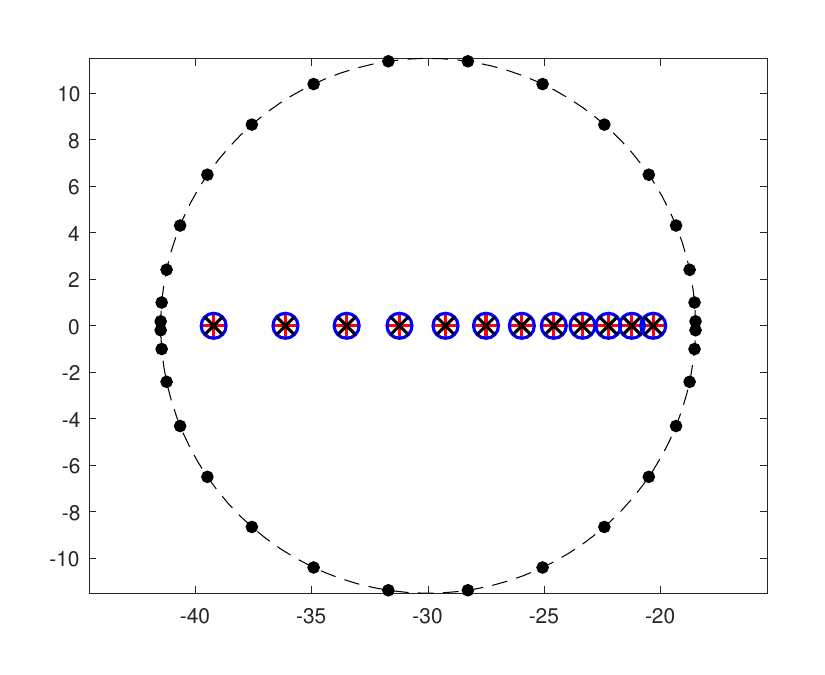}
	\includegraphics[width=0.48\textwidth]{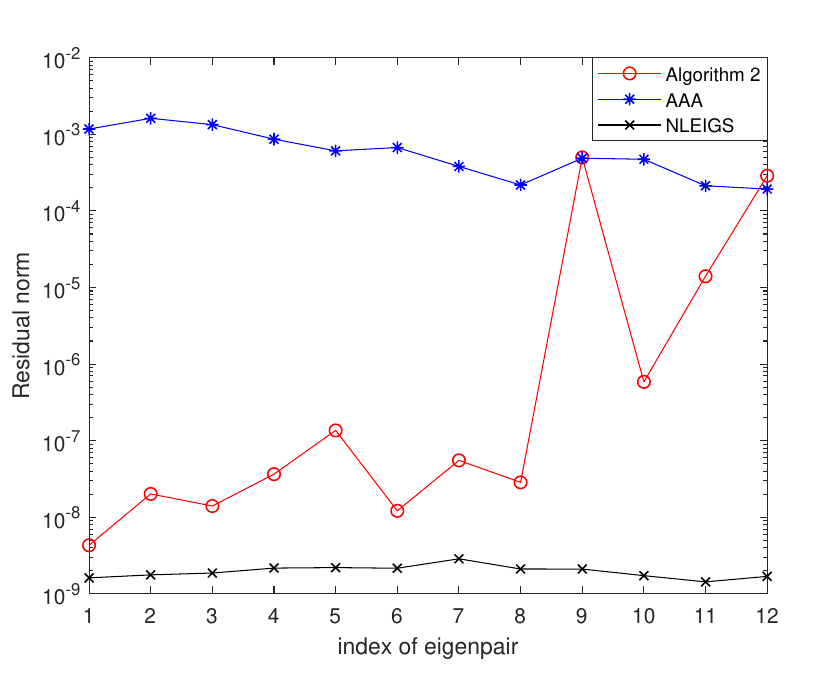}
	\caption{Left:  Eigenvalues of  (\ref{eqn:hadelerproblem}) inside  a
		circle of radius  $r=11.5$ and center $c=-30$  obtained by Algorithm
		\ref{alg:subsit}  (plus), the AAA algorithm  (circle) and the NLEIGS algorithm (cross).  Right:  The
		residual norm $\|T(\lambda) u\|_{\infty}$ of the computed eigenpairs.}\label{fig:Hadler_Algo4_AAA}
\end{figure}

\begin{figure}
	\centering
	\begin{minipage}[c]{.60\linewidth}
		\includegraphics[width=7cm]{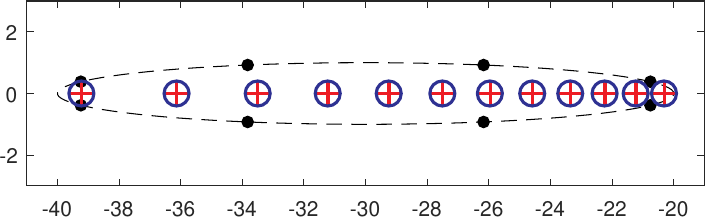}
	\end{minipage}
	\caption{Eigenvalues  of (\ref{eqn:hadelerproblem})  inside an
          ellipse  centered  at  $c=-30$   and  with  semi-major  axis
          $r_x=10$ and  semi-minor axis $r_y=1$ obtained  by solving the
          expanded problem (\ref{eq:sc} --  \ref{eq:sc1})  (plus) and by Beyn's method (circle).}
	\label{fig:HadelerEllipse}
\end{figure}

\section{Concluding remarks} \label{sec:concl} An appealing feature of
the  general approach  proposed in  this paper  for solving  nonlinear
eigenvalue problems is its simplicity.  A general nonlinear problem is
approximated  by   a  rational   eigenvalue  problem  which   is  then
linearized.  The  resulting  linear  problem provides  the  basis  for
developing a number of methods and one of them, among possibly many
others, is discussed in this paper.  The theory allows to exactly
predict which eigenvalues of the original problem  are  well approximated
and to ensure that no eigenvalues in the region will be missed.
Another attribute of the proposed method is its flexibility. It is possible to
compute  all  eigenvalues  in  a  union of    small  regions  each
requiring a small  number of poles, or to use one single large region
to compute  many eigenvalues but now with a large number of
poles. The question as to how to optimally exploit these trade-offs remains 
to be  further investigated.
Finally, the method has a good potential for solving realistic
large sparse nonlinear eigenvalue problems such as those mentioned in
the introduction. For more details, we refer the reader to~\cite{ElGuide19}.
In this regard, we note that \emph{only one factorization
is required} namely that of $S(\sigma)$ where $\sigma$ is the shift
in the shift-and-invert procedure. In many applications the patterns
of the matrices $A_j$ are not too different from one another and so 
$S(\sigma)$, which is itself a combination of the $A_j$s, will remain sparse.

\bibliographystyle{siam} 
\bibliography{SaaEM20} 

\newpage

\appendix
\section{Additional Numerical Experiments}

\subsection*{Example 4}
We consider the following nonlinear eigenvalue problem, see~\cite{Sol06,Kre09},
	\eq{eq:FEex}
	T(z)=B_0+zA_0+\frac{1}{1-z}e_ne_n^T,
	\en
	with
	$$B_0=n\begin{pmatrix} 
	2 & -1 & &&\\
	-1 &  \ddots &  \ddots&\\
	&  \ddots & 2&-1\\
	&&-1&1
	\end{pmatrix},\quad A_0=-\frac{1}{6n}\begin{pmatrix} 
	4 & 1 & &&\\
	1 &  \ddots &  \ddots&\\
	&  \ddots & 4&1\\
	&&1&2
	\end{pmatrix},$$
resulting from the finite element discretization of the nonlinear boundary eigenvalue problem
\eq{eq:nbep}
-u''(x)=\lambda u(x), 0\leq x\leq1,\quad u(0)=u'(1)+\frac{\lambda}{\lambda-1}u(1)=0.
\en

To   compare    our   results   with   those    obtained   by   Beyn's
method~\cite[Example  4.11]{Bey12}, we  consider the case when  $n=100$ and  compute
five eigenvalues enclosed by a  circle centered at $c=150$ with radius
$r=150$.  We first  determine the  number of  quadrature
nodes  $m$ needed  to get  a  good  rational  approximation of the function
$f(z)=\frac{1}{1-z}$   inside   the    considered   circular   contour
$\Gamma$.  The  right part  of  Figure  \ref{fig:RatAppEx2} shows  the
approximation error for  the rational approximation of $f(z)$ versus $m$.

Because $f(z)$ is itself a rational function 
  a high  enough accuracy  is
  obtained for a small value of $m$, namely $m=6$.
  Therefore, we solve the
  expanded problem (\ref{eq:sc} --  \ref{eq:sc1}) with $m=6$ and
$\sigma=c$  to get the  approximate  eigenvalues inside  the circle.   The
computed eigenvalues    are     shown    on     the    left     of
Figure~\ref{fig:RatAppEx2}. These  results are directly  compared with
the ones obtained by Beyn's first algorithm with the backward error of the $5$
eigenvalues being smaller than $\delta = 10^{-4}$. To obtain this level of accuracy,
$50$ quadrature nodes are needed to determine the $5$ eigenvalues inside the circle.

Note that the function $f(z)$ is already  given in a rational form
and so we  can  solve \nref{eq:FEex}  by considering
the same linearization as the one invoked in Section~\ref{sec:alg}:
\eq{eq:FEexLinea}
\begin{bmatrix} 
	I & -I \\
	e_ne_n^T & B_0
\end{bmatrix} \begin{bmatrix} \frac{u}{1-z} \\ u \end{bmatrix} =z \begin{bmatrix} 
I & 0\\
0 & -A_0
\end{bmatrix} \begin{bmatrix} \frac{u}{1-z} \\ u \end{bmatrix}.
\en 

\noindent
Figure \ref{fig:FexLinea} compares the eigenvalues obtained by solving \nref{eq:FEexLinea} and the ones obtained by  computing the  eigenvalues and of the
expanded problem (\ref{eq:sc} --  \ref{eq:sc1}).


Alternatively,  we  can  solve problem~\nref{eq:FEex} directly   using
Algorithm~\ref{alg:subsit} without restarts, i.e., with only one outer loop
($\ell =1$ only). Let $w_i = [ v_i; \ u_i ]$, $i=1,...,\nu$ be a set of
$\nu$ random vectors  of  size $N  =  (m+1)n$,  where $v_{i}\in\mathbb{R}^{mn}$  and
$u_{i}\in\mathbb{R}^{n}$.  For  this experiment, we  choose $\nu=7$ and we  apply
$q=5$ steps of  Algorithm  \ref{alg:shinv}  on each
$w_{i}$.  We orthogonalize
the  resulted vectors  $U  = [u_1,u_2,\ldots,u_\nu]$  to  obtain a  good
subspace  to perform  the  projection method  introduced in  Algorithm
\ref{alg:subsit}.

We set $m=6$ and invoke one outer iteration of  Algorithm \ref{alg:subsit} to
compute the eigenvalues enclosed by the same circular contour.  As can
be seen on the left of Figure \ref{fig:FexLinea}, the eigenvalues
obtained are in good agreement with
those obtained by computing the eigenvalues of the expanded problem
(\ref{eq:sc} -- \ref{eq:sc1}) and by Beyn's method.
\begin{figure}
	\centering
		\includegraphics[width=0.45\textwidth]{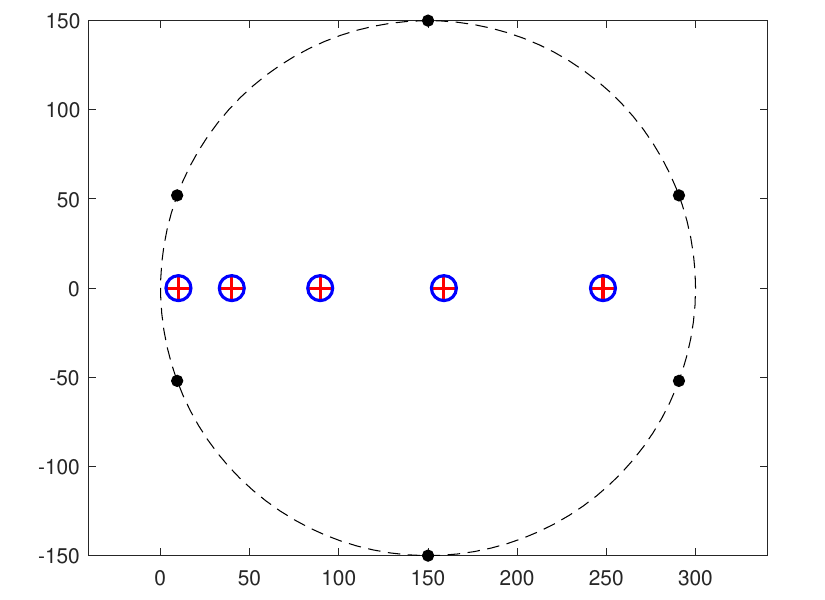}
		\includegraphics[width=0.45\textwidth]{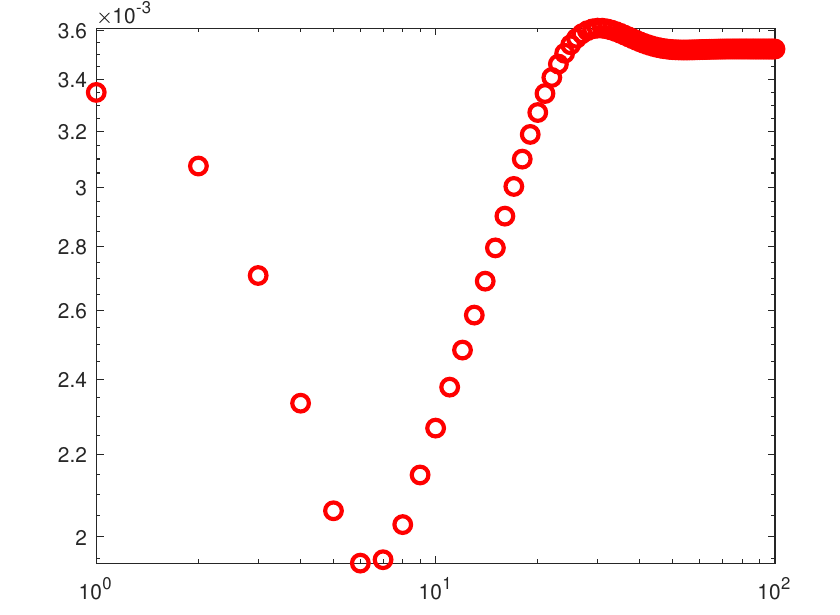}
	\caption{Left: Eigenvalues of \nref{eq:FEex} inside a
		circle of radius $r=150$ centered at $c=150$ obtained by computing the  eigenvalues of the
		expanded problem (\ref{eq:sc} --  \ref{eq:sc1}) (plus) and  
		by Beyn's method (circle).
		Right: The errors $e_m$ of the rational approximation of $f(z) = \frac{1}{1-z}$ versus the number of quadrature nodes $m$. }
		\label{fig:RatAppEx2}
\end{figure}
\begin{figure}
	\centering
	\includegraphics[width=0.45\textwidth]{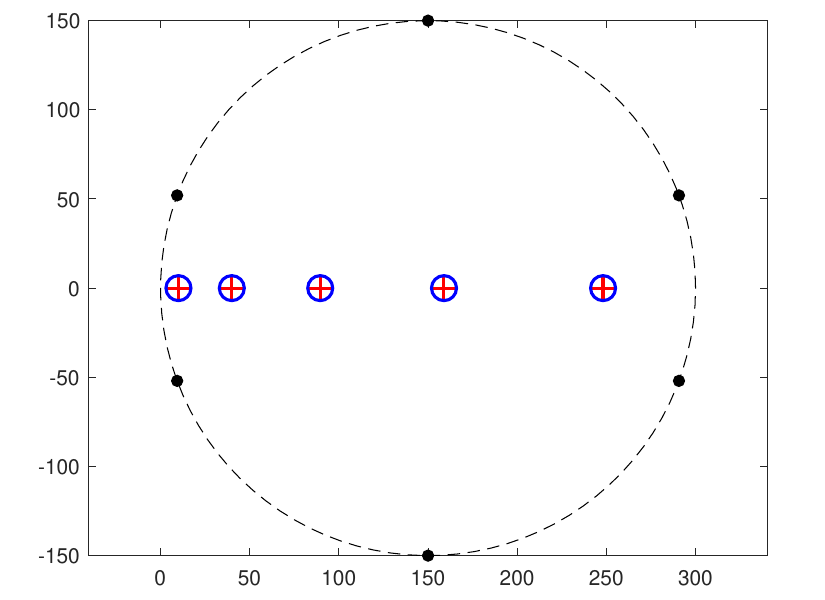}
	\includegraphics[width=0.45\textwidth]{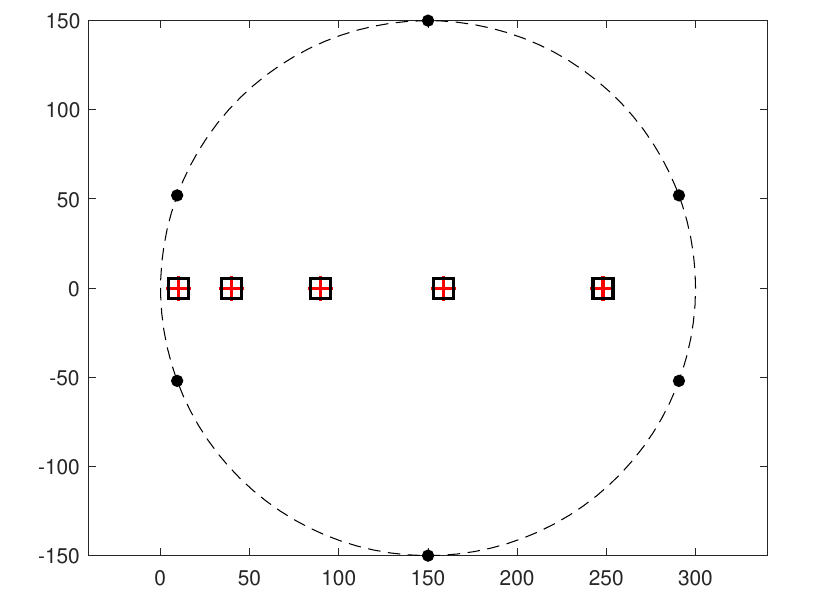}
	\caption{Left: Eigenvalues of \nref{eq:FEex} inside a
		circle of radius $r=150$ centered at $c=150$ obtained by Algorithm \ref{alg:subsit} (plus) and
		by Beyn's method (circle). Right: Eigenvalues of \nref{eq:FEex} inside a
		circle of radius $r=150$ centered at $c=150$ obtained by computing the  eigenvalues of the
		expanded problem (\ref{eq:sc} --  \ref{eq:sc1}) (plus) and
		by linearization \eqref{eq:FEexLinea} (square).}
	\label{fig:FexLinea}
      \end{figure}

%
%
\subsection*{Example 5: Butterfly Problem}\label{exp:butterfly}
To illustrate the behavior of rational approximation methods
when using a contour centered
at an  arbitrary point in  the complex  plane, we present a few results 
with  the \emph{butterfly problem} (so called 
because of the  distribution of its eigenvalues in  the complex plane)
available from the  NLEVP   collection~\cite{nlevp_collection}.
This is a quartic eigenvalue problem of the form
\begin{equation}
\label{eq:butterfly}
T(\lambda) = A_0 + \lambda A_1 + \lambda^2 A_2 + \lambda^3 A_3 +
\lambda^4 A_4 ,
\end{equation}
where   $A_0, A_1, \ldots, A_4$
are  structured matrices of  size  $n=64$.  The  $256$
eigenvalues  of   this  problem  are   shown  on  the  left side of 
Figure~\ref{fig:butterfly}. A detailed  description  of this  example can
be found  in \cite{MehW02}.  We compute the  eigenvalues and vectors of the
expanded problem (\ref{eq:sc} --  \ref{eq:sc1})  with
$m=50$  quadrature nodes  to  compute the  eigenvalues  enclosed by  a
circle  centered  at $c=1+1\imath$  with  radius  $r=0.5$. We  compare
approximations of  $13$ computed eigenvalues with  those determined by
the linearization  of problem \eqref{eq:butterfly} and  by application
of  Beyn's  method.   The  right  part  of  Figure~\ref{fig:butterfly}
summarizes  our findings.   Alternatively,  we can  use a  rectangular
contour    as    described    already   in    Example    2.     Figure
\ref{fig:butterfly_reg}  shows  approximations   of  $17$  eigenvalues
enclosed     by    the     rectangular     contour  defined by the     corners
$c_2=(0.55,0.48)$ and $c_4 = (1.2,1.3)$
obtained by solving the
expanded problem (\ref{eq:sc} --  \ref{eq:sc1})  with
$m=90$ quadrature nodes and direct linearization.
\begin{figure}
	\centering
  \includegraphics[height=4.7cm, width=0.45\textwidth]{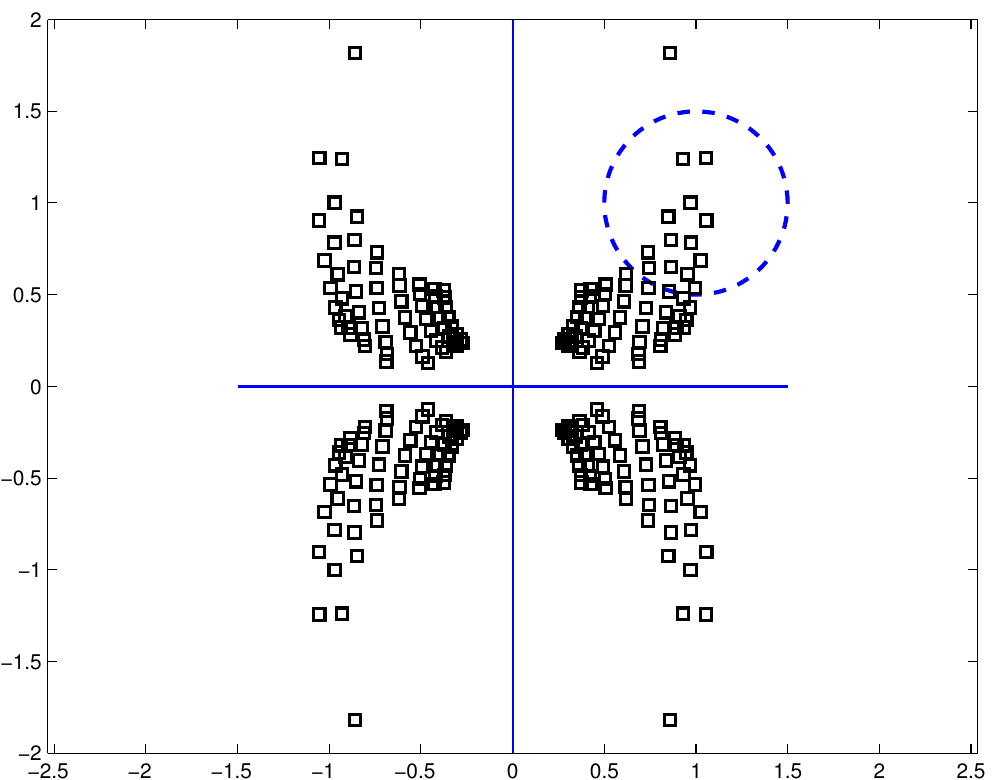}
  \includegraphics[height=4.7cm, trim={0 10 0 1}, width=0.45\textwidth]{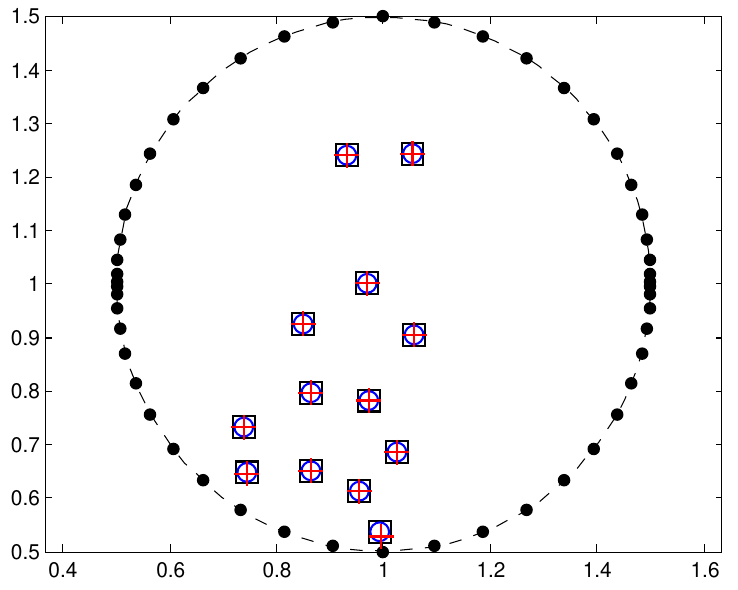}
  \caption{Left: All $256$ eigenvalues of butterfly example \eqref{eq:butterfly} (square) obtained by linearization. Circle contour centered at $c=1+1\imath$ with radius $r=0.5$ (dashed). Right: Eigenvalues of butterfly example \eqref{eq:butterfly} inside a
		circle of radius $r=0.5$ centered at $c=1+1\imath$ obtained by computing the  eigenvalues  of the
		expanded problem (\ref{eq:sc} --  \ref{eq:sc1}) (plus) with $m=50$, by linearization (square) and by Beyn's method (circle).}\label{fig:butterfly}
\end{figure}
\begin{figure}
	\centering
  \includegraphics[height=4.7cm,width=0.45\textwidth]{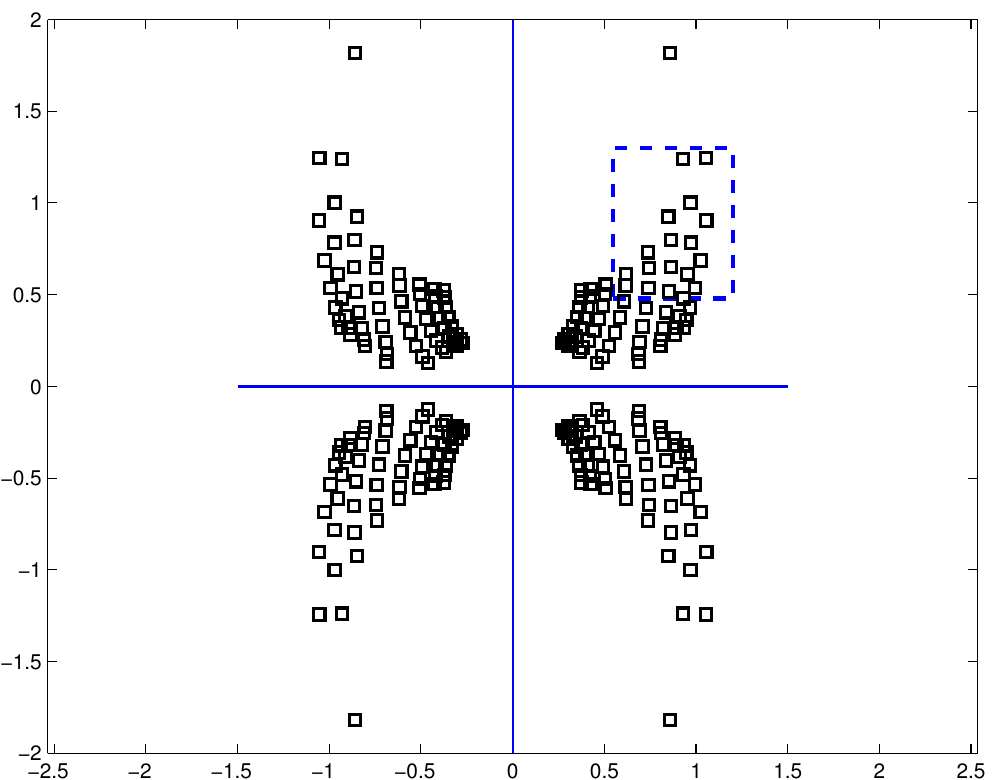}
  \includegraphics[height=4.7cm,trim={0 10 0 1},width=0.45\textwidth]{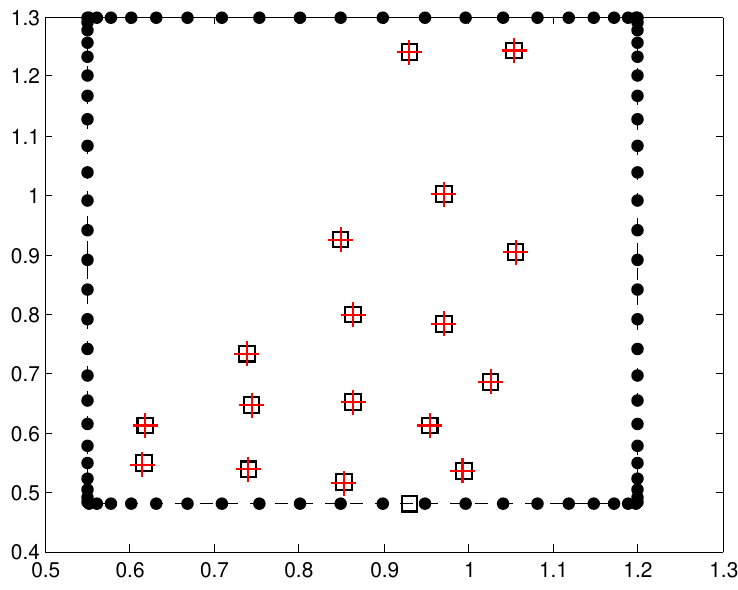}
  \caption{Left:   All   $256$   eigenvalues  of   butterfly   example
    \eqref{eq:butterfly}          (square)         obtained          by
    linearization.      Rectangular      contour      centered      at
    $0.875         +          0.89\imath$         with         corners
    $(0.55,1.3),    (0.55,0.48),     (1.2,0.48)$    and    $(1.2,1.3)$
    (dashed).    Right:     Eigenvalues    of     butterfly    example
    \eqref{eq:butterfly}  inside  a  rectangular contour  obtained  by computing the  eigenvalues  of the
    expanded problem (\ref{eq:sc} --  \ref{eq:sc1})   (plus)    with    $m=90$   and    by
    linearization.}\label{fig:butterfly_reg}
\end{figure}

\end{document}